\newtheorem{thm}{Theorem}
\newtheorem{lemma}[thm]{Lemma}
\newtheorem*{remark}{Remark}
\newtheorem{cor}{Corollary}
\newtheorem*{definition}{Definition}
\newtheorem{theorem}{Theorem}[section]
\def\l{\lambda}
\def\s{\sigma}
\def\r{\rho}
\def\p{\varphi}
\def\bR{\mathbb R}
\def\DMO{\DeclareMathOperator}
\DMO{\GL}{GL}
\DMO{\SL}{SL}
\DMO{\gl}{\mathfrak{gl}}
\DMO{\ssl}{\mathfrak{sl}}
\def\imp{\ensuremath{\implies}}
\title{Blow-up radial solutions for elliptic systems with monotonic non-linearities}
\author{ Daniel Devine\footnote{School of Mathematics, Trinity College Dublin; {\tt dadevine@tcd.ie}} 
    $\;\;$        $\;$
Gurpreet Singh\footnote{School of Mathematical Sciences, Dublin City University, Ireland; {\tt gurpreet.bajwa2506@gmail.com}}}
\date{}
\begin{document}
\maketitle

\begin{abstract}
We are concerned with the existence and boundary behaviour of positive radial solutions for the system
\begin{equation*}
\left\{
\begin{aligned}
\Delta u&=g(|x|,v(x)) &&\quad\mbox{in}\ \Omega, \\
\Delta v&=f(|x|,|\nabla u(x)|) &&\quad\mbox{in}\ \Omega,
\end{aligned}
\right.
\end{equation*}
where $\Omega \subset \bR^N$ is either a ball centered at the origin or the whole space $\bR^N$, and $f,g\in C^{1}([0,\infty)\times [0,\infty))$, are non-negative, and increasing. Firstly, we study the existence of positive radial solutions in the case when the system is posed in a ball corresponding to their behaviour at the boundary. Next, we discuss the existence of positive radial solutions in case when  $g(|x|,v(x)) = |x|^{a} v^p$ and $f(|x|, |\nabla u (x)|) = |x|^{b} h(|\nabla u|)$. Finally, we take $h(t) = t^s$, $s> 1$, $\Omega = \bR^N$ and by the use of dynamical system techniques we are able to describe the behaviour at infinity of such positive radial solutions.
\end{abstract}

\smallskip

{\bf{Keywords:}} radial solutions, elliptic systems, nonlinear gradient terms, dynamical systems

\section{Introduction}
In this paper we study the existence and behaviour of radially symmetric solutions to the system
\begin{equation}\label{system}
\left\{
\begin{aligned}
\Delta u&=g(|x|,v(x)) &&\quad\mbox{in}\ \Omega, \\
\Delta v&=f(|x|,|\nabla u(x)|) &&\quad\mbox{in}\ \Omega,
\end{aligned}
\right.
\end{equation}
where $\Omega \subset \bR^N$ is either a ball centered at the origin with radius $R> 0$ or the whole space $\bR^N$, and for all $t\in\bR, R>0$ we have
\begin{equation}\label{cond f,g finite}
\lim\limits_{r\to R^{-}}g(r,t)<\infty\qquad	\mbox{ and }\qquad \lim\limits_{r\to R^{-}}f(r,t)<\infty.
\end{equation}
We assume that $f,g\in C^{1}([0,\infty)\times [0,\infty))$, are non-negative, and are increasing in both variables. The study of systems such as \eqref{system} has generally focused on the case where $f,g$ are polynomial in both variables, motivating the above conditions. The case where $\Omega$ is a ball, $g(|x|,v(x))=v(x)$ and $f(|x|,|\nabla u|)=|\nabla u|^{2}$, that is
\begin{equation}\label{DLSsystem}
\left\{
\begin{aligned}
\Delta u&=v  &&\quad\mbox{ in } \Omega, \\
\Delta v&=|\nabla u|^{2} &&\quad\mbox{ in } \Omega,
\end{aligned}
\right.
\end{equation}
was first studied by Diaz, Lazzo and Schmidt in~\cite{DLS2005}. This choice of the functions $f$ and $g$ appears in the study of the dynamics of viscous, heat-conducting fluids.  In~\cite{DLS2005}, the authors
obtained that the system \eqref{DLSsystem} has one positive solution which blows up at the boundary and the authors also observed that in case of small dimensions $(N \leq 9)$, there exists one
sign-changing solution which also blows up at the boundary. In~\cite{DRS2007,DRS2008}, Diaz, Rakotoson, and Schmidt extended these results to time dependent systems. 

In~\cite{S2015}, Singh considered system \eqref{system} in the case where $g=v^{p}$ and $f=f(|\nabla u|)$, that is
\begin{equation}\label{Ssystem}
\left\{
\begin{aligned}
\Delta u&=v^{p}  &&\quad\mbox{ in } \Omega, \\
\Delta v&=f(|\nabla u|) &&\quad\mbox{ in } \Omega,
\end{aligned}
\right.
\end{equation}
Here, the author considered the system \eqref{Ssystem} both in the case of a ball centered at the origin with positive radius and in the whole space $\bR^{N}$ and classified all the positive radial solutions of \eqref{Ssystem} together with the behaviour of solutions at the boundary. The results obtained in \cite{S2015} have since been extended in \cite{DS2022} to the following weighted system:
\begin{equation}\label{system2}
\left\{
\begin{aligned}
\Delta u&=|x|^{a}v^{p}  &&\quad\mbox{ in } \Omega, \\
\Delta v&=|x|^{b}v^{q}f(|\nabla u|) &&\quad\mbox{ in } \Omega,
\end{aligned}
\right.
\end{equation}
where the authors again discussed the existence of positive radial solutions and boundary behaviour in case of a ball with positive radius, centered at the origin and in the whole space $\bR^{N}$.

Boundary blow-up problems have a long history and one could link them back to atleast a century ago. For example, in 1916, Bieberbach \cite{B1916} had studied the boundary blow-up solutions for the equation $\Delta u=e^u$ in a planar domain. Since then, there has been many new techniques which have been devised to tackle such solutions ( see \cite{GRbook2008,GRbook2012,Rbk}). Semilinear elliptic equations with nonlinear gradient terms have been studied extensively for such boundary blow-up solutions in last few decades ( for instance, see \cite{AGMQ2012,CPW2013,FQS2013, FV2017, GNR2002, MMMR2011}).

In \cite{GGS2019}, Ghergu, Giacomoni and Singh studied the following more general quasilinear elliptic system with nonlinear gradient terms 

\begin{equation}\label{gsys2}
	\left\{
	\begin{aligned}
		\Delta_{p} u&=v^{m} |\nabla u|^{\alpha} &&\quad\mbox{ in } \Omega, \\
		\Delta_{p} v&=v^{\beta}|\nabla u|^q &&\quad\mbox{ in } \Omega,
	\end{aligned}
	\right.
\end{equation}
in which the authors classified all the positive radial solutions in case $\Omega$ is a ball and also obtained the behaviour at infinity of such solutions.

We first consider the case when $\Omega = B_R$ is a ball of radius $R> 0$ and centered at the origin. We obtained that in this case the system \eqref{system} has positive radially symmetric solutions $(u, v)$ such that $u$ or $v$ ( or both ) blow up around $\partial{\Omega}$ if 
\begin{equation}\label{gsys3}
\int_{1}^{\infty}\frac{ds}{g\left(R,D^{-1}\left(C\left(\int_{0}^{s}\sqrt{f(\r,t)}dt\right)^{2}\right)\right)} <\infty,
\end{equation}
as well as a (sometimes equivalent) necessary condition for this behaviour. Next, we study the system 
\begin{equation*}
	\left\{
	\begin{aligned}
		\Delta u&=|x|^{a}v^{p}  &&\quad\mbox{ in } B_R, \\
		\Delta v&=|x|^{b}h(|\nabla u|) &&\quad\mbox{ in } B_R,
	\end{aligned}
	\right.
\end{equation*} 
where $h\in C^{1}[0,\infty)$ is an increasing function such that $h(t) > 0$ for all $t > 0$. We obtained that the above system has positive radially symmetric solutions $(u, v)$ such that $u$ or $v$ ( or both ) blow up around the boundary of $B_R$ if and only if

\begin{equation}\label{bound1}
	\int_{1}^\infty\frac{ds}{\Big(\displaystyle \int_0^s H(t)dt  \Big)^{p/(2p+1)}} < \infty, \quad\mbox{ where }\; H(t)=\int_0^t h(k)dk.
\end{equation}
Finally, in case when $h(t) = t^s$, $s \geq 1$, then we are able to find the exact 
rate at which the solution $(u, v)$ blows up at the infinity. In order to do that, we have used the dynamical system techniques for cooperative systems with negative divergence to obtain the exact blow up rate at infinity.

The conditions \eqref{gsys3} and \eqref{bound1} are similar to optimal conditions obtained by Keller \cite{K1954} and Osserman \cite{O1957} in 1950s while studying the existence of a solution to the boundary blow-up problem
\begin{equation}\label{gsys4}
	\left\{
	\begin{aligned}
		\Delta u&=f(u)&&\quad\mbox{ in }\Omega,\\
		u&=\infty &&\quad\mbox{ on }\partial\Omega,
	\end{aligned}
	\right.
\end{equation}
where $\Omega\subset \bR^N$ is a bounded domain and $f\in C^{1}[0,\infty)$ is a nonnegative increasing function. The authors obtained that \eqref{gsys4} has $C^2(\Omega)$ solutions if and only if
\begin{equation}\label{gsys5}
	\int_1^\infty\frac{ds}{\sqrt{F(s)}}<\infty\quad\mbox{ where }\; F(s)=\int_0^s f(t)dt.
\end{equation}
Equation \eqref{gsys5} has also been seen in various other circumstances as it is related to the maximum principle for nonlinear elliptic inequalities. For example, if $u\in C^2(\Omega)$ is nonnegative and satisfies $\Delta u\leq f(u)$ in $\Omega$, then, if $u$ vanishes at a point in $\Omega$, it must vanish everywhere in $\Omega$.



\section{Main Results}
Our first two results provide necessary and sufficient conditions under which positive radially symmetric solutions $(u,v)$ to \eqref{system} blow up around $\partial{\Omega}$, where $\Omega$ is a ball of radius $R$ centred at the origin. As we see in Remark 2.3 below, the conditions \eqref{int H finite} and \eqref{int I finite} are equivalent for functions $g$ which grow at most polynomially. We define the following two functions:
\begin{equation*}
\begin{aligned}
A_{\r}(s)&=\int_{0}^{s}\frac{g(\r,t)}{\sqrt{t}}dt\qquad\mbox{for all}\ 0<\r\leq R,\\
D(s)&= \int_{0}^{s}g^{2}(R,t)dt.
\end{aligned}
\end{equation*}
We assume for all $0<\r\leq R$
\begin{equation*}
\lim\limits_{s\to\infty}A_{\r}(s)=\lim\limits_{s\to\infty}D(s)=\infty.
\end{equation*}
Note that both functions defined above are strictly increasing in the variable $s$. Throughout this paper we let $C$ denote positive constants which may vary from line to line, or within the same line.

\smallskip

\begin{theorem}\label{newthm}
\begin{itemize}
\item[(i)] Suppose $(u,v)$ is a positive, radial solution to \eqref{system}. If 
\begin{equation*}
\lim\limits_{r\to R}v(r)=\infty,
\end{equation*}
then there exists $C>0$, such that
\begin{equation}\label{int H finite}
\int_{1}^{\infty}\frac{ds}{g\left(R, A_{r_{0}}^{-1}\left(C\int_{0}^{s}\sqrt{f(R,t)}dt\right)\right)}<\infty,
\end{equation}
for some $r_{0}<R$.
\item[(ii)] Suppose there exists $C>0$, $\r\leq R$ such that
\begin{equation}\label{int I finite}
\int_{1}^{\infty}\frac{ds}{g\left(R,D^{-1}\left(C\left(\int_{0}^{s}\sqrt{f(\r,t)}dt\right)^{2}\right)\right)} <\infty,
\end{equation}
then there exists a positive, radial solution $(u,v)$ to \eqref{system} satisfying
\begin{equation*}
\lim\limits_{r\to R}v(r)=\infty.
\end{equation*}
\item[(iii)] If system \eqref{system} has a positive radial solution satisfying
\begin{equation}\label{u,v unbound}
	\lim\limits_{|x|\to R}u(x)=\infty
\end{equation}
then there exist $C>0$, $\r\leq R$ such that the following holds:
\begin{equation*}
	\int_{1}^{\infty}\frac{s  ds}{g\left(R,D^{-1}\left(C\int_{0}^{s}\sqrt{f(\r,t)}dt\right)^{2}\right)}=\infty.
\end{equation*}
Also, if system \eqref{system} has a positive radial solution where 
\begin{equation}\label{u bounded}
	\mbox{u is bounded in}\ B_{R},
\end{equation}
then there exists $C>0$, $r_{0}\leq R$ such that the following holds:
\begin{equation*}
	\int_{1}^{\infty}\frac{s ds}{g(R,A^{-1}_{r_{0}}\left(C\int_{0}^{s}\sqrt{f(R,t)}dt\right)}<\infty.
\end{equation*}
\end{itemize}	
\end{theorem}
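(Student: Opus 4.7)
The three parts of the theorem split into necessary-condition results (i) and (iii), proved by a priori estimates, and an existence result (ii), which I will handle by a shooting argument. In all cases I first reduce \eqref{system} to the radial ODE system $(r^{N-1}u')'=r^{N-1} g(r,v)$ and $(r^{N-1}v')'=r^{N-1} f(r,u')$ on $(0,R)$ with $u'(0)=v'(0)=0$. Non-negativity of $f,g$ gives $u',v'\ge 0$; then $u'(r)\le \tfrac{r}{N}g(r,v(r))$ (obtained by using that $g(\cdot,v(\cdot))$ is non-decreasing), and substituting into $u''=g(r,v)-\tfrac{N-1}{r}u'$ yields the two-sided bound $\tfrac{1}{N}g(r,v(r))\le u''(r)\le g(r,v(r))$, and symmetrically for $v''$. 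These pointwise estimates drive the rest of the proof.

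For part (i), I use a Keller--Osserman-style computation. Multiplying the $v$-equation by $v'$ gives $\tfrac{1}{2}((v')^2)'\le v'f(R,u')$; integrating on $[0,r]$ and then once by parts (the remaining integral is non-negative because $u''\ge 0$ and $f$ is increasing in $u'$) produces $(v'(r))^2\le 2v(r)f(R,u'(r))$. Dividing by $\sqrt{v(r)}$ gives $v'/\sqrt{v}\le \sqrt{2f(R,u')}$, and multiplying by $g(r_0,v)$ converts the left side to $(A_{r_0}(v(r)))'$. The bound $g(r_0,v)\le g(r,v)\le Nu''$ (valid for $r\ge r_0$) together with $u''\sqrt{f(R,u')}=\tfrac{d}{dr}\int_0^{u'(r)}\sqrt{f(R,t)}\,dt$ converts the right side into a constant multiple of $\tfrac{d}{dr}\int_0^{u'(r)}\sqrt{f(R,t)}\,dt$. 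Integrating on $[r_0,r]$ and applying $A_{r_0}^{-1}$ yields
\[
v(r)\le A_{r_0}^{-1}\!\left(C\int_0^{u'(r)}\sqrt{f(R,t)}\,dt\right)\qquad\text{for }r\text{ near }R.
\]
If $v(r)\to\infty$, then so does $u'(r)$; substituting this into $u''(r)\le g(R,v(r))$ and separating variables via $dr=du'/u''$ yields \eqref{int H finite} upon integrating on $(r_0,R)$.

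For part (ii), I use a shooting argument. For each $\alpha>0$ let $(u_\alpha,v_\alpha)$ be the maximal radial solution with $u_\alpha(0)=v_\alpha(0)=\alpha$, $u'_\alpha(0)=v'_\alpha(0)=0$, existing on $[0,T(\alpha))$; standard ODE theory gives continuous dependence of $T(\alpha)$ on $\alpha$. Running the part-(i) estimates in reverse---using $u''\ge \tfrac{1}{N}g(r,v)$ together with the companion quantity $D(s)=\int_0^s g^2(R,t)\,dt$ (the $g^2$ arises because the lower bound for $v$ is extracted from squaring the lower bound for $u'$)---produces $v_\alpha(r)\ge D^{-1}\!\bigl(C(\int_0^{u_\alpha'(r)}\sqrt{f(\rho,t)}\,dt)^2\bigr)$ for $r\ge\rho$. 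The hypothesis \eqref{int I finite} then forces $T(\alpha)<R$ for $\alpha$ sufficiently large (because $g(R,v_\alpha)\ge g(R,\alpha)$ makes the time-to-blow-up shrink as $\alpha\to\infty$), while $T(\alpha)>R$ for $\alpha$ small; continuity and the intermediate value theorem furnish $\alpha^*$ with $T(\alpha^*)=R$, and the corresponding solution satisfies $v(r)\to\infty$ as $r\to R$.

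Part (iii) is proved by refining the argument of part (i) to track the antiderivative $u(r)=\int_0^r u'(t)\,dt$ directly, rather than only $u'(r)$. The substitution $\xi=u'(t)$, together with $u''\asymp g(R,v)$, converts $u(R)-u(0)$ into an integral in $\xi$ weighted by $\xi$---this is the origin of the extra factor $s$ in both integrals of (iii). The first statement is the contrapositive of an upper bound for $u(R)$ obtained from $u''\ge\tfrac{1}{N}g(r,v)$, while the second uses $u''\le g(R,v)$. The main obstacle throughout is the careful asymptotic handling of the lower-order radial terms $\tfrac{N-1}{r}u'$ and $\tfrac{N-1}{r}v'$ and the transition between $g(r_0,\cdot)$ and $g(R,\cdot)$; verifying that $u'\to\infty$ whenever $v\to\infty$ in (i), and obtaining the reverse estimates needed for (ii), are the most delicate steps.
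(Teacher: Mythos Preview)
Your argument for part (i) follows the paper's almost exactly: the same two-sided bounds $\tfrac{1}{N}g(r,v)\le u''\le g(r,v)$, the multiplication of $v''\le f(R,u')$ by $v'$ and integration to obtain $v'/\sqrt{v}\le\sqrt{2f(R,u')}$, then multiplication by $g(r_0,v)\le Nu''$ and integration to reach $A_{r_0}(v(r))\le C\int_0^{u'(r)}\sqrt{f(R,t)}\,dt$, and finally separation of variables in $u''\le g(R,v)$.

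Part (ii) is where you depart from the paper. The paper does \emph{not} shoot on the initial value. It fixes arbitrary positive initial data, shows via the reverse ($D$-based) estimates that the maximal existence interval $[0,R_0)$ is finite, and then uses a \emph{rescaling}: with $\tilde f(r,t)=\lambda^{2}f(\lambda r,t/\lambda)$, $\tilde g(r,t)=\lambda^{2}g(\lambda r,t)$ and $u(r)=\tilde u(r/\lambda)$, $v(r)=\tilde v(r/\lambda)$, the choice $\lambda=R/R_0$ transports the blow-up to the prescribed radius $R$. Your shooting route is reasonable in spirit, but the sentence ``standard ODE theory gives continuous dependence of $T(\alpha)$ on $\alpha$'' is not correct as stated: maximal existence times are in general only upper semicontinuous in the initial data. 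The cooperative structure of the system does make $\alpha\mapsto T(\alpha)$ non-increasing, and combined with upper semicontinuity this can be pushed through, but that argument must be supplied. You also owe a justification that $T(\alpha)>R$ for small $\alpha$; this is not automatic when $f(\cdot,0)$ or $g(\cdot,0)$ is strictly positive.

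For part (iii), your substitution $\xi=u'(t)$ with $dt\asymp d\xi/g(R,v)$ is a somewhat more direct variant of the paper's route. The paper instead packages the estimates from (i)--(ii) into the tail functions
\[
\Theta(t)=\int_t^\infty\frac{ds}{g\bigl(R,A_{r_0}^{-1}(C\int_0^s\sqrt{f(R,\tau)}\,d\tau)\bigr)},\qquad
\Psi(t)=\int_t^\infty\frac{ds}{g\bigl(R,D^{-1}(C\int_0^s F(\rho,\tau)\,d\tau)\bigr)},
\]
proves the two-sided bound $\Theta(w(r))\le C(R-r)$ and $\Psi(w(r))\ge C(R-r)$, inverts to trap $w(r)$ between $\Theta^{-1}(C(R-r))$ and $\Psi^{-1}(C(R-r))$, and then changes variables in $\int w(r)\,dr$ via $s=\Psi^{-1}$ (resp.\ $\Theta^{-1}$) to produce the extra factor $s$. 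The two routes agree after unwinding the substitutions; the paper's organisation makes the comparison with $R-r$ more explicit, while yours is shorter once the $v\leftrightarrow u'$ bounds from (i)--(ii) are in hand.
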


\smallskip

\begin{remark}
\normalfont{
Suppose our non-linearity $g(r,t)$ satisfies 
\begin{equation}\label{lim g}
\lim\limits_{t\to\infty}\frac{g(r,t)}{t^p}=\p(r)>0
\end{equation}
for some $p>0$, then conditions \eqref{int H finite} and \eqref{int I finite} are equivalent. In this case we find, for all large enough $s>0$,
\begin{equation*}
A_{R}(s)=A(s)=\int_{0}^{s}\frac{g(R,t)}{\sqrt{t}}dt \geq  \int_{\frac{s}{2}}^{s}\frac{g(R,t)}{\sqrt{t}}dt\geq   C\int_{\frac{s}{2}}^{s}\frac{t^{p}}{\sqrt{t}}dt\geq Cs^{\frac{2p+1}{2}}.
\end{equation*}
In other words, we have for large enough $s$ that $A^{-1}(Cs^{\frac{2p+1}{2}})\leq s$, which implies
\begin{equation*}
\begin{aligned}
g\left(R,A^{-1}\left(C\left(\left(\int_{0}^{s}\sqrt{f(R,t)}dt\right)^{\frac{2}{2p+1}}\right)^{\frac{2p+1}{2}}\right)\right)
&\leq g\left(R,\left(C\int_{0}^{s}\sqrt{f(R,t)}dt\right)^{\frac{2}{2p+1}}\right)\\
&\leq C\left(\int_{0}^{s}\sqrt{f(R,t)}dt\right)^{\frac{2p}{2p+1}}.
\end{aligned}
\end{equation*}
Now fix $0<\r<R$. We see from \eqref{lim g} that there exists $N$ satisfying
\begin{equation*}
D(s)=\int_{0}^{s}g^{2}(R,t) dt \leq \int_{0}^{N}g^{2}(R,t)dt+C\int_{N}^{s}t^{2p} dt \leq  Cs^{2p+1}.
\end{equation*}
for all $s>N$. So for large enough $s$ we have that $s\leq D^{-1}(Cs^{sp+1})$, which then implies
\begin{equation*}
\begin{aligned}
g\left(R,D^{-1}\left(C\int_{0}^{2s}F(\r,t)dt\right)\right)&\geq g\left(R,D^{-1}\left(C\left(\int_{0}^{s}\sqrt{f(\r,t)}dt\right)^{2}\right)\right)\\
&= g\left(R,D^{-1}\left(\left(C\left(\int_{0}^{s}\sqrt{f(\r,t)}dt\right)^{\frac{2}{2p+1}}\right)^{2p+1}\right)\right)\\
&\geq g\left(R,C\left(\int_{0}^{s}\sqrt{f(\r,t)}dt\right)^{\frac{2}{2p+1}}\right)\\
&\geq C\left(\int_{0}^{s}\sqrt{f(\r,t)}dt\right)^{\frac{2p}{2p+1}},
\end{aligned}
\end{equation*}
where the first inequality follows from Lemma 4.1 in Singh 2015. By $(i)$ and $(ii)$ of theorem \ref{newthm} we thus see that if our non-linearity $g(r,t)$ satisfies \eqref{lim g}, then
\begin{equation*}
\lim\limits_{r\to R}v(r)=\infty\quad\mbox{if and only if}\quad \int_{1}^{\infty}\frac{ds}{\left(\int_{0}^{s}\sqrt{f(\r,t)}dt\right)^{\frac{2p}{2p+1}}}<\infty 
\end{equation*}
for some $\r\leq R$. 
}
\end{remark}

\smallskip

Next, if $g(|x|,v(x)) = |x|^{a} v^p$ and $f(|x|, |\nabla u (x)|) = |x|^{b} h(|\nabla u|)$, where $h\in C^{1}[0,\infty)$ is an increasing function such that $h(t) > 0$ for all $t > 0$. Then we have the following result:
\begin{theorem}\label{thm1}
Let us suppose that $(u, v)$ is a positive radial solution of
\begin{equation}\label{osystem}
\left\{
\begin{aligned}
\Delta u&=|x|^{a}v^{p}  &&\quad\mbox{ in } B_R, \\
\Delta v&=|x|^{b}h(|\nabla u|) &&\quad\mbox{ in } B_R,
\end{aligned}
\right.
\end{equation}
\begin{enumerate}
\item[(i)] Both $u$ and $v$ are bounded if and only if
\begin{equation}\label{bounded}
\int_{1}^\infty\frac{ds}{\Big(\displaystyle \int_0^s H(t)dt  \Big)^{p/(2p+1)}} = \infty.
\end{equation}
\item[(ii)] $u$ is bounded and $\lim_{|x|\nearrow R}v(r)=\infty$  if and only if
\begin{equation}\label{int1}
\int_{1}^\infty\frac{s ds}{\Big(\displaystyle \int_0^s H(t)dt  \Big)^{p/(2p+1)}} <\infty.
\end{equation}
\item[(iii)] $\lim_{|x|\nearrow R}u(x)=\lim_{|x|\nearrow R}v(x)=\infty$ if and only if
\begin{equation}\label{int2}
\int_{1}^{\infty}\frac{ds}{\left(\int_{0}^{s}H(t)dt\right)^{\frac{p}{2p+1}}} <\infty\quad\mbox{and} \int_{1}^{\infty}\frac{s\ ds}{\left(\int_{0}^{s}H(t)dt\right)^{\frac{p}{2p+1}}}=\infty.
\end{equation}
\end{enumerate}
\end{theorem}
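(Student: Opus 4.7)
My plan is to specialize Theorem \ref{newthm} and the Remark after it to the nonlinearities $g(r,t)=r^{a}t^{p}$ and $f(r,t)=r^{b}h(t)$, and then recast the resulting Keller--Osserman type criteria, which naturally involve $K(s):=\int_{0}^{s}\sqrt{h(t)}\,dt$, into the cleaner form involving $J(s):=\int_{0}^{s}H(t)\,dt$ that appears in \eqref{bounded}--\eqref{int2}.

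Since $g(r,t)=r^{a}t^{p}$ satisfies \eqref{lim g} with $\varphi(r)=r^{a}>0$, the Remark makes \eqref{int H finite} and \eqref{int I finite} simultaneously necessary and sufficient for $v$-blowup. Direct computation gives $A_{\rho}(s)=\frac{2\rho^{a}}{2p+1}s^{(2p+1)/2}$ and $D(s)=\frac{R^{2a}}{2p+1}s^{2p+1}$, both with explicit power-function inverses, while $\int_{0}^{s}\sqrt{f(\rho,t)}\,dt=\rho^{b/2}K(s)$. Substituting into Theorem \ref{newthm}(i)--(iii) and absorbing positive constants yields two criteria: $\lim_{r\to R}v(r)=\infty$ is equivalent to $\int_{1}^{\infty}K(s)^{-2p/(2p+1)}\,ds<\infty$, while $u$ bounded on $B_{R}$ is equivalent to $\int_{1}^{\infty}s\,K(s)^{-2p/(2p+1)}\,ds<\infty$.

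The heart of the argument is then the comparison $K(s)^{2}\asymp J(s)$ for large $s$, which converts both criteria into the stated $J$-form. The upper bound $K(s)^{2}\le sH(s)$ is Cauchy--Schwarz; combined with a doubling-type control on $H$ coming from monotonicity of $h$, this yields $K(s)^{2}\le C J(s)$. For the matching lower bound, I would introduce a threshold $\tau=\tau(s)$ defined by $h(\tau)=h(s)/4$, so that on $[\tau,s]$ one has $\sqrt{h(t)}\ge\sqrt{h(s)}/2$, giving $K(s)\ge\frac{s-\tau}{2}\sqrt{h(s)}$; bounding $J(s)$ by splitting at $\tau$ and exploiting monotonicity of $h$ then produces $K(s)^{2}\ge c J(s)$. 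I expect this uniform comparison across both polynomial and exponential growth regimes of $h$ to be the main technical obstacle, since the crude Cauchy--Schwarz upper bound is too lossy for rapidly growing $h$ and monotonicity must be exploited carefully, likely via a variant of Lemma 4.1 of \cite{S2015} referenced in the Remark.

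With $K(s)^{2}\asymp J(s)$ in hand, the three cases assemble as follows. Standard elliptic regularity applied to $\Delta u=|x|^{a}v^{p}$ shows that $v$ bounded implies $u$ bounded, so (i) is the negation of the $v$-blowup criterion, giving \eqref{bounded}. For (ii), the combination of $u$ bounded and $v\to\infty$ is equivalent to the $s$-weighted $J$-integral being finite: that implies the unweighted integral is finite (producing $v\to\infty$ via Theorem \ref{newthm}(ii)) and simultaneously guarantees $u$ bounded, giving \eqref{int1}. For (iii), finiteness of the unweighted $J$-integral produces a $v$-blowup solution via Theorem \ref{newthm}(ii), while divergence of the $s$-weighted $J$-integral precludes $u$ bounded by the contrapositive of Theorem \ref{newthm}(iii), forcing $u\to\infty$; the converse is immediate from the necessary conditions in Theorem \ref{newthm}.
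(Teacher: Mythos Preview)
Your approach differs from the paper's. The paper establishes the two-sided bounds
\[
\frac{1+a}{N+a}\,r^{a}v^{p}\le u''\le r^{a}v^{p},\qquad \frac{1+b}{N+b}\,r^{b}h(u')\le v''\le r^{b}h(u'),
\]
and then simply defers to the proof of Theorem~1 in \cite{S2015}, which in the unweighted case works \emph{directly} with the $J$-form by multiplying $v''\asymp h(u')$ against $u''$ and integrating (producing $H(u')$ and then $J$). By contrast, you apply Theorem~\ref{newthm} and the Remark as a black box, which yields criteria in terms of $K(s)=\int_0^s\sqrt{h}$, and you then need the comparison $K^2\asymp J$ to reach \eqref{bounded}--\eqref{int2}. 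Your route is more modular and stays entirely within the present paper, at the cost of an extra conversion step; the paper's route is shorter but outsources the substance to \cite{S2015}.

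There is one genuine rough spot in your sketch. Your threshold argument for the lower bound $K(s)^2\ge cJ(s)$ does not work as written: with $h(\tau)=h(s)/4$ you only get $K(s)\ge\tfrac12(s-\tau)\sqrt{h(s)}$, while the crude upper bound on $J(s)$ obtained by splitting at $\tau$ retains a term of order $\tau^{2}h(s)$ that cannot be controlled by $(s-\tau)^{2}h(s)$ when $\tau$ is close to $s$. The clean fix is a one-line symmetrisation: writing
\[
K(s)^{2}=2\iint_{0\le\tau\le t\le s}\sqrt{h(t)h(\tau)}\,d\tau\,dt\;\ge\;2\iint_{0\le\tau\le t\le s}h(\tau)\,d\tau\,dt=2J(s),
\]
using only that $h$ is increasing. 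For the other direction you do not need a pointwise bound $K^{2}\le CJ$; Cauchy--Schwarz gives $K(s)^{2}\le sH(s)\le J(2s)$, and a change of variable in the integrals then shows $\int_1^\infty K^{-2p/(2p+1)}<\infty\iff\int_1^\infty J^{-p/(2p+1)}<\infty$ (and likewise for the $s$-weighted versions). With this repair your assembly of (i)--(iii) from Theorem~\ref{newthm} goes through.
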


{\bf Note: } Here, $H(t) = \int_0^t h(k)dk$.

\smallskip

If $h(t) = t^s$, $s\geq 1$, then we get the following result:

\begin{cor}
Consider the system
\begin{equation}\label{system 3}
\left\{
\begin{aligned}
\Delta u &=|x|^av^p &&\quad\mbox{in}\ B_{R}, \\
\Delta v &=|x|^{b}|\nabla u|^{s} &&\quad\mbox{in}\ B_{R}.
\end{aligned}
\right.
\end{equation}
We then have:
\begin{enumerate}[label=(\roman*)]
\item All positive radial solutions to \eqref{system 3} are bounded if and only if
\begin{equation*}
ps-1\leq 0.
\end{equation*}
\item There exists positive radial solutions to \eqref{system 3} satisfying \eqref{u bounded} and $\lim\limits_{|x|\to R}v(x)=\infty$ if and only if
\begin{equation*}
s>2\left(1+\frac{1}{p}\right).
\end{equation*}
\item There exists positive radial solutions to \eqref{system 3} satisfying \eqref{u,v unbound} if and only if
\begin{equation*}
\frac{1}{p}<s\leq2\left(1+\frac{1}{p}\right).
\end{equation*}
\end{enumerate}
\end{cor}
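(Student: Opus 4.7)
The plan is to apply Theorem~\ref{thm1} directly with $h(t)=t^{s}$ (where $s$ denotes the exponent fixed in the corollary), reducing the integral conditions \eqref{bounded}--\eqref{int2} to elementary $p$-integrals whose convergence is standard.

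First I would compute
\[
H(t)=\int_{0}^{t}k^{s}\,dk=\frac{t^{s+1}}{s+1},\qquad \int_{0}^{\tau}H(t)\,dt=\frac{\tau^{s+2}}{(s+1)(s+2)},
\]
so that
\[
\Bigl(\int_{0}^{\tau}H(t)\,dt\Bigr)^{p/(2p+1)}=C\,\tau^{\alpha},\qquad \alpha:=\frac{(s+2)p}{2p+1}.
\]
The two integrals appearing in Theorem~\ref{thm1} then reduce, up to a positive multiplicative constant, to $\int_{1}^{\infty}\tau^{-\alpha}\,d\tau$ and $\int_{1}^{\infty}\tau^{1-\alpha}\,d\tau$, which converge iff $\alpha>1$ and $\alpha>2$ respectively. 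A one-line algebraic manipulation gives the equivalences $\alpha>1\iff ps>1$ and $\alpha>2\iff s>2(1+1/p)$.

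Reading off from Theorem~\ref{thm1}: part (i) of the corollary requires the first integral to diverge, i.e.\ $\alpha\leq 1$, i.e.\ $ps-1\leq 0$; part (ii) requires the second integral to converge, i.e.\ $\alpha>2$, i.e.\ $s>2(1+1/p)$; part (iii) requires the first to converge while the second diverges, i.e.\ $1<\alpha\leq 2$, i.e.\ $1/p<s\leq 2(1+1/p)$. Since the three $s$-ranges are mutually exclusive and together cover $(0,\infty)$, they match the trichotomy of Theorem~\ref{thm1} exactly. I do not anticipate any genuine obstacle here: the corollary is a pure substitution exercise once Theorem~\ref{thm1} is taken as a black box, and the only nontrivial step is checking the two algebraic equivalences above.
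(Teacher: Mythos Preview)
Your proposal is correct and is exactly the intended argument: the paper does not give a separate proof of the corollary, treating it as an immediate specialization of Theorem~\ref{thm1} to $h(t)=t^{s}$. Your computation of $H$, the resulting power $\alpha=(s+2)p/(2p+1)$, and the two algebraic equivalences $\alpha>1\iff ps>1$ and $\alpha>2\iff s>2(1+1/p)$ are precisely what is needed, and nothing more.
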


Now, we study the system \eqref{system 3} in the whole space $\bR^N$, that is,

\begin{equation}\label{sysrn}
\left\{
\begin{aligned}
\Delta u&= |x|^{a}v^p&&\quad\mbox{ in } \bR^N,\\
\Delta v&= |x|^{b}|\nabla u|^s &&\quad\mbox{ in } \bR^N,
\end{aligned}
\right.
\end{equation}
where $a$, $b$, $p> 0$ and $s \geq 1$.

\begin{theorem}\label{thmrn}
\begin{enumerate}
\item System \eqref{sysrn} has positive radial solutions if and only if
\begin{equation}\label{int sqrt f inf}
ps-1\leq 0.
\end{equation}

\item Assume $p< 1$ and $ps < 1$. Let $(u,v)$ be a positive radially symmetric solution of \eqref{sysrn}. If 
\begin{equation}\label{div}
\frac{p(s-2)(s+as+b+2)}{1-ps} \leq 2(N+a-1),
\end{equation}
then
\begin{equation*}
\lim_{|x|\rightarrow \infty}\frac{u(x)}{|x|^{\frac{(a+2)(1-ps)+ ps(a+1)+bp}{1-ps}}}= \frac{(AB^s K)^{\frac{p}{ps - 1}}}{DK}
\end{equation*}
and
\begin{equation*}
\lim_{|x|\rightarrow \infty}\frac{v(x)}{|x|^{\frac{(a+1)s + b + 2}{1-ps}}}= (AB^s K)^{\frac{1}{ps - 1}},
\end{equation*}
where, 
$$
A = 2 + \frac{b+s(1+a+2p)}{1-ps},
$$
	
$$
B = N+a + p\Big(2+\frac{b+s(1+a+2p)}{1-ps}\Big),
$$
	
$$
K = N + \frac{b+s(1+a+2p)}{1-ps},
$$
	
$$
D = 2+a + p\Big(2+\frac{b+s(1+a+2p)}{1-ps}\Big).
$$
\end{enumerate}
\end{theorem}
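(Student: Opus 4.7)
Writing $u=u(r), v=v(r)$, the system \eqref{sysrn} reduces to
\begin{equation*}
(r^{N-1}u'(r))'=r^{N-1+a}v(r)^p,\qquad (r^{N-1}v'(r))'=r^{N-1+b}u'(r)^s,
\end{equation*}
with $u'(0)=v'(0)=0$ and $u(0),v(0)>0$; local existence near $r=0$ follows from Picard iteration on the corresponding integral equations, and both $u,v$ are positive and nondecreasing. For the ``if'' direction of Part~(1), when $ps\le 1$ I would iterate $u'(r)=r^{1-N}\int_0^r t^{N-1+a}v^p\,dt$ and $v'(r)=r^{1-N}\int_0^r t^{N-1+b}(u')^s\,dt$ to obtain polynomial majorants $u(r)\le C(1+r^{\a})$, $v(r)\le C(1+r^{\b})$, the exponents $\a,\b$ being fixed by the self-similar matching $\a-2=a+p\b$, $\b-2=b+s(\a-1)$; these are positive real numbers precisely when $ps<1$ (the borderline $ps=1$ yielding slightly super-polynomial but still finite bounds), which prevents finite-radius blow-up and extends the local solution to all of $\bR^N$. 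For the ``only if'' direction, when $ps>1$ the same iteration amplifies and forces blow-up at some finite radius; this is consistent with Theorem~\ref{thm1}(iii) applied to $h(t)=t^s$, where $H(t)=t^{s+1}/(s+1)$ gives $\int_0^s H(t)\,dt\sim s^{s+2}$, so the integrability condition \eqref{int2} collapses exactly to $ps>1$.

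\textbf{Part (2): reduction to a cooperative dynamical system.} Assume $p<1$ and $ps<1$ and take $\a=D$, $\b=A$ as in the theorem's constants. Set $t=\log r$ and introduce
\begin{equation*}
X(t)=\frac{u(r)}{r^{\a}},\quad Y(t)=\frac{v(r)}{r^{\b}},\quad W(t)=\frac{r u'(r)}{r^{\a}},\quad Z(t)=\frac{r v'(r)}{r^{\b}}.
\end{equation*}
Using the matching relations to kill the explicit $r$-dependence of the nonlinear terms (the exponents $2-\a+a+p\b$ and $2-\b+b+s(\a-1)$ both vanish), a direct computation produces the autonomous four-dimensional system
\begin{align*}
\dot X &= W-DX,& \dot Y &= Z-AY,\\ \dot W &= Y^p-BW,& \dot Z &= W^s-KZ,
\end{align*}
whose unique positive equilibrium is $Y_*=(AB^sK)^{1/(ps-1)}$, $W_*=Y_*^p/B$, $Z_*=AY_*$, $X_*=W_*/D$; unscaling, convergence of $(X,Y,W,Z)$ to this point as $t\to\infty$ translates directly into the asymptotic behaviour claimed in the theorem.

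\textbf{Convergence, and the main obstacle.} In the positive orthant all nontrivial off-diagonal Jacobian entries ($pY^{p-1}$ and $sW^{s-1}$) are nonnegative, so the system is cooperative; the subsystem $(Y,W,Z)$ is irreducible and the full divergence $-(A+B+D+K)$ is strictly negative. Linearising at the equilibrium yields the characteristic polynomial $(\l+D)\big[(\l+A)(\l+B)(\l+K)-ps\,ABK\big]$, and the Routh--Hurwitz conditions are satisfied automatically when $ps<1$ (since then all coefficients of the cubic are positive), delivering local asymptotic stability. The principal difficulty is upgrading this to global convergence of the physical orbit: the feedback loop $Y\mapsto W\mapsto Z\mapsto Y$ has effective exponent $ps<1$ and is therefore sub-critical, but extracting a uniform $L^\infty$ bound on the orbit is delicate. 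This is where condition \eqref{div} enters: upon noting $(a+1)s+b+2=\b(1-ps)$ it rewrites as $p\b(s-2)\le 2(N+a-1)$, which is the structural hypothesis permitting the construction of a forward-invariant rectangular box around the equilibrium that traps the orbit and yields the required a~priori boundedness. Once boundedness is in hand, Hirsch's convergence theorem for irreducible cooperative systems forces the $\omega$-limit set of the orbit to lie in the equilibrium set, and uniqueness of the positive equilibrium pins the limit down to $(X_*,Y_*,W_*,Z_*)$; the linear equation $\dot X=-DX+W$ with $W(t)\to W_*$ then gives $X(t)\to X_*=W_*/D$, completing the identification of the asymptotics.
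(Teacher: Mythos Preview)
Your change of variables is genuinely different from the paper's. The paper uses the \emph{ratio} variables
\[
X=\frac{ru'}{u},\quad Y=\frac{rv'}{v},\quad Z=\frac{r^{a+1}v^{p}}{u'},\quad W=\frac{r^{b+1}(u')^{s}}{v'},
\]
which produce a quadratic cooperative system in $(Y,Z,W)$ whose divergence is
\[
-W+(s-2)Z+(p-2)Y+2+a+N(1-s)+s+b,
\]
a quantity that is \emph{not} constant in sign. In the paper, the orbit is shown (via the comparison principle, starting from the finite limit $\xi_{1}$ as $t\to-\infty$) to lie in the box $[[\xi_{1},\xi_{2}]]$, and condition \eqref{div} is precisely what forces the divergence to be negative on that box, so Hirsch's Theorem~\ref{thmdet3} rules out cycles and Theorem~\ref{thmdet2} pins the $\omega$-limit set to $\xi_{2}$.

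Your scaled variables $X=u/r^{\a}$, $Y=v/r^{\b}$, $W=u'/r^{\a-1}$, $Z=v'/r^{\b-1}$ give instead a system whose $(Y,W,Z)$-divergence is the constant $-(A+B+K)<0$. This is a real structural gain, but it exposes the gap in your write-up: you claim that \eqref{div} is what allows the construction of a forward-invariant rectangular box trapping the orbit. That is not so. For any $\bar Y\ge Y_{*}$ one can choose $\bar Z=A\bar Y$ and $\bar W\in[\bar Y^{p}/B,(KA\bar Y)^{1/s}]$ (a nonempty interval because $ps<1$) and the box $[0,\bar Y]\times[0,\bar W]\times[0,\bar Z]$ is forward invariant; \eqref{div} plays no role. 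What you actually need, and have not supplied, is a reason why the \emph{physical} orbit enters such a box: in your coordinates $Y(t)=v(r)/r^{\b}\to\infty$ as $t\to-\infty$ (since $v(0)>0$), so unlike the paper you cannot start the comparison at $-\infty$. Forward boundedness for large $t$ must instead be extracted from the a~priori polynomial bounds $v(r)\le C(1+r^{\b})$, $u'(r)\le C(1+r^{\a-1})$ you sketched in Part~(1); once you prove those carefully, boundedness of $(Y,W,Z)$ for $t\ge 0$ follows and your Hirsch argument closes---without ever invoking \eqref{div}. So either your route yields a strictly stronger conclusion than stated (Part~2 without hypothesis \eqref{div}), or there is a hidden use of \eqref{div} that you have not identified; in either case the sentence ``This is where condition \eqref{div} enters'' is not justified as written.
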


\smallskip

\section{Notes on dynamical systems}
Let $x=(x_1,x_2,x_3)$, $y=(y_1,y_2,y_3)$ be any two points in $\bR^{3}$, then for $x_i\leq y_i$ we write
$$
x\leq y 
$$
where $i=1,2,3$. For $x\leq y$ and $x\neq y$, we write $x<y$.

Also, the open ordered interval is defined as
$$
[[x,y]]=\{z\in \bR^3:x<z<y\}\subset \bR^3.
$$
Consider the initial value problem
\begin{equation}\label{det1}
	\left\{
	\begin{aligned}
		&\xi_{t}=H(\xi) \quad\mbox{ for } t\in \bR,\\
		&\xi(0)=\xi_{0},
	\end{aligned}
	\right.
\end{equation}
where $ H:\bR^{3}\rightarrow \bR$ is a $C^{1}$ function. This implies that there exists a unique solution $\xi$ of \eqref{det1} defined in a maximal time interval for any $\xi_{0}\in \bR^{3}$.  Let $\varphi(\cdot,\xi_{0})$ denotes the flow associated to \eqref{det1}, that is,  $t\longmapsto \varphi(t,\xi_{0})$ is the unique solution of \eqref{det1} defined in a maximal time interval. Let us suppose that the vector field $h$ is cooperative, that is
$$
\frac{\partial H_{i}}{\partial x_{j}}\geq 0 \quad \mbox{ for } 1\leq i,j\leq 3,\;\; i\neq j.
$$
Next, follow the results due to Hirsch \cite{Hirsch1989, Hirsch1990}.
\begin{theorem}\label{thmdet1}{\rm (see \cite[Theorem 1]{Hirsch1990})}
	Any compact limit set of \eqref{det1} contains an equilibrium or is a cycle.
\end{theorem}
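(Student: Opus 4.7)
The plan is to prove Hirsch's theorem by combining the order-preserving property of cooperative flows with a reduction to a planar Poincaré-Bendixson argument. I would carry this out in four steps.

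First, I would establish monotonicity of the flow associated with \eqref{det1}. Because $H$ is cooperative, Kamke's comparison principle applies: if $\xi_0\leq\eta_0$, then $\varphi(t,\xi_0)\leq\varphi(t,\eta_0)$ for all $t\geq 0$ in the common interval of existence. Concretely, one differentiates the difference $w(t)=\varphi(t,\eta_0)-\varphi(t,\xi_0)$ along the flow, observes that $w$ satisfies a linear ODE with an off-diagonal-nonnegative coefficient matrix (by the mean value theorem applied componentwise), and concludes $w(t)\geq 0$ for $t\geq 0$ by an exponential-shift/Gronwall argument. This is the only place cooperativity enters directly.

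Second, I would prove the structural lemma that a compact $\omega$-limit set $L=\omega(\xi_0)$ cannot contain two distinct points $p<q$ unless it contains an equilibrium. If $p,q\in L$ with $p<q$, pick times $t_n\to\infty$ and $s_n\to\infty$ with $\varphi(t_n,\xi_0)\to p$, $\varphi(s_n,\xi_0)\to q$, and arrange $t_n<s_n<t_{n+1}$. Monotonicity applied to the translates of the trajectory then produces a sequence of flow iterations of the section map which is ordered and contained in a compact set; passing to a limit yields a fixed point of a flow map, hence an equilibrium in $L$ (by boundedness and monotone convergence in $\bR^3$). Thus if $L$ contains no equilibrium, it is unordered in the strict order.

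Third, I would use the fact that an unordered compact subset of $\bR^3$ projects injectively onto a two-dimensional set under the projection along the diagonal direction $(1,1,1)$ onto the plane $\Pi=\{x_1+x_2+x_3=0\}$. Indeed, if $p,q\in L$ projected to the same point, then $p-q$ would be a nonzero multiple of $(1,1,1)$, forcing $p<q$ or $q<p$. Hence $L$ is homeomorphic to a compact subset of $\Pi$, and the flow restricted to $L$ is topologically conjugate to a continuous flow on that planar set.

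Finally, I would invoke Poincaré-Bendixson on the planar image: a compact invariant set of a planar flow that contains no equilibrium must be a single periodic orbit (a cycle). Combined with the dichotomy from the second step (equilibrium-free $L$ is unordered, hence planar), this gives the conclusion: $L$ either contains an equilibrium or is a cycle. The main obstacle is the structural lemma in step two; the delicate point is ensuring that the monotone sequences of iterates one constructs actually converge to an equilibrium rather than drift off or collapse to a non-stationary limit, which requires careful use of compactness of $L$ together with the semicontinuity of the flow map.
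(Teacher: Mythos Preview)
The paper does not give a proof of this statement; it is quoted from Hirsch \cite{Hirsch1990} as a black-box tool for the analysis in Section~6. So there is nothing in the paper to compare your proposal against, and I can only comment on the proposal as a sketch of Hirsch's argument.

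Your four-step outline is indeed the skeleton of Hirsch's proof, but two of the steps are too quick as written. In step~2, the mechanism by which two ordered points $p<q$ in $L=\omega(\xi_0)$ force an equilibrium is not the vague ``section map'' argument you describe; it is the Convergence Criterion for monotone flows: from $p<q$ one finds $t_1<t_2$ with $\varphi(t_1,\xi_0)<\varphi(t_2,\xi_0)$, and then monotonicity with $\tau=t_2-t_1$ gives $\varphi(t,\xi_0)\le\varphi(t+\tau,\xi_0)$ for all $t\ge t_1$, so $\varphi(t_1+n\tau,\xi_0)$ is increasing, bounded, and converges. From this one deduces $\varphi(t,\xi_0)$ itself converges, and the limit is an equilibrium; in fact $L$ is then that single equilibrium, not merely a set containing one.

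The more serious gap is in step~4. The classical Poincar\'e--Bendixson theorem applies to flows defined on open subsets of $\bR^2$, not to a flow defined only on the compact image $\pi(L)$. Knowing that $L$ is homeomorphic to a planar set and carries a fixed-point-free flow is not by itself enough to conclude it is a periodic orbit. Hirsch's actual argument supplies the missing ingredient: one works with a genuine two-dimensional local section $\Sigma\subset\bR^3$ transverse to the flow at a point of $L$, and shows that the Poincar\'e return map on $\Sigma$ is monotone; alternatively, one shows $L$ lies on an invariant Lipschitz surface on which the ambient flow is truly planar. Either way, one needs the flow in a neighbourhood of $L$ in $\bR^3$, not just the flow restricted to $L$, to run the Poincar\'e--Bendixson argument. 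Your sketch omits this, and without it the final implication does not follow.
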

\begin{definition}
	A finite sequence of equilibria $\zeta_{1},\zeta_{2},\dots,\zeta_{n}=\zeta_{1}$, $(n\geq 2)$ is known as a circuit such that $W^{u}(\xi_{i})\cap W^{s}(\xi_{i+1})$ is non-empty. Here, $W^{u}$ and $W^{s}$ respresents the stable and unstable manifolds respectively.
\end{definition}

\medskip
{\bf{Note: }}	There is no circuit in case all the equilibria are hyperbolic and also their stable and unstable manifolds are mutually transverse.

\begin{theorem}\label{thmdet2}{\rm (see \cite[Theorem 2]{Hirsch1990}). }
	Let us assume that $L\subset \bR^{3}$ is a compact set such that:
	\begin{enumerate}
		\item [ (i) ] There is no circuit and all the equilibria in $L$ are hyperbolic.
		\item [ (ii)] The number of cycles in $L$ which have period less than or equal to $T$ is finite, where $T>0$.
	\end{enumerate}
	Then:
	\begin{enumerate}
		\item [ (a) ] Every limit set in $L$ is an equilibrium or cycle.
		\item [ (b) ] $L$ has finite number of cycles.
	\end{enumerate}
	
\end{theorem}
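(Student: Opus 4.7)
The plan is to exploit the structure theory of three-dimensional cooperative flows due to Hirsch, for which Theorem \ref{thmdet1} already provides the basic dichotomy: every compact limit set contains an equilibrium or is a cycle. Conclusions (a) and (b) then emerge by combining this dichotomy with the isolation forced by hyperbolicity and the absence of circuits.

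For part (a), let $\omega \subset L$ be a non-empty compact limit set. By Theorem \ref{thmdet1}, either $\omega$ is a cycle (and we are done) or $\omega$ contains an equilibrium $p$. In the latter case, I would argue by contradiction: suppose $\omega \neq \{p\}$ and pick a non-stationary $q \in \omega$. Its full trajectory lies in $\omega$, and its $\alpha$- and $\omega$-limit sets are themselves compact limit sets inside $L$. Iterated application of Theorem \ref{thmdet1}, together with the fact that hyperbolic equilibria in the compact set $L$ are isolated and therefore finite in number, produces a finite sequence $\xi_{1}, \xi_{2}, \dots, \xi_{n} = \xi_{1}$ with $W^{u}(\xi_{i}) \cap W^{s}(\xi_{i+1}) \neq \emptyset$. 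This is precisely a circuit, contradicting hypothesis (i). Hence $\omega$ must itself be an equilibrium or a cycle.

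For part (b), suppose $L$ contained infinitely many cycles $\gamma_{n}$. Hypothesis (ii) forces their periods $T_{n}$ to be unbounded, so by passing to a subsequence we may assume $T_{n} \to \infty$. Compactness of $L$ then extracts a Hausdorff limit $\gamma_{\infty} \subset L$, which is closed and invariant. Applying Theorem \ref{thmdet1} to $\gamma_{\infty}$, it is either a cycle or contains an equilibrium; the first alternative is inconsistent with $T_{n} \to \infty$ (a Hausdorff limit of cycles with uniformly bounded periods would itself carry a comparable period), and the second yields, as in part (a), a heteroclinic chain among hyperbolic equilibria---again a circuit, contradicting (i).

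The main obstacle is not the book-keeping above but the underlying structure theorem for three-dimensional cooperative systems. Reducing 3D monotone dynamics to essentially two-dimensional Poincar\'e--Bendixson behaviour---in particular, showing that two distinct cycles cannot be componentwise ordered and that limit sets of monotone trajectories are order-convex---is genuinely delicate, and is precisely what occupies the technical core of \cite{Hirsch1990}. Since the statement is quoted directly from Hirsch, my proposal would be to treat those monotone-flow structure results as a black box and only verify that the hypotheses (i)--(ii) on $L$ slot into Hirsch's framework as stated.
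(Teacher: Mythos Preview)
The paper does not prove this statement at all: Theorem~\ref{thmdet2} is simply quoted from \cite[Theorem~2]{Hirsch1990} and used as a black-box tool in the proof of Theorem~\ref{thmrn}, with no argument supplied. So there is no ``paper's own proof'' to compare against; the appropriate action here is exactly what you suggest in your final paragraph, namely to cite Hirsch and move on.

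As for the sketch itself: the outline for (a) is broadly in the right spirit, but the step ``iterated application of Theorem~\ref{thmdet1} \dots\ produces a finite sequence $\xi_1,\dots,\xi_n=\xi_1$'' hides real work. To manufacture a \emph{closed} chain (a circuit) rather than merely a heteroclinic path, one needs the monotone structure of the flow in an essential way---for instance, to know that limit sets are unordered and that trajectories in $\omega$ must shadow connections between equilibria in a way that eventually recurs. Likewise in (b), the assertion that a Hausdorff limit of cycles with periods $T_n\to\infty$ cannot itself be a cycle is not automatic in general flows; it again relies on the three-dimensional cooperative structure. You correctly flag these as the delicate points and defer them to \cite{Hirsch1990}, which is precisely what the paper does.
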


\begin{theorem}\label{thmdet3}{\rm (see \cite[Theorem 7]{Hirsch1989})}
Assume that $\xi_{1}$, $\xi_{2}\in \bR^{3}$ such that $\xi_1< \xi_2$. Further, if
	$$
	{\rm div}{H}<0 \quad\mbox{ in } [[\xi_1,\xi_2]],
	$$
	then there are no cycles of \eqref{det1} in $[[\xi_1,\xi_2]].$
\end{theorem}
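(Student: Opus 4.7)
The plan is to argue by contradiction, combining Liouville's formula with the Perron--Frobenius structure that cooperativity imposes on the linearised flow. Assume a cycle $\gamma$ of \eqref{det1} of minimal period $T>0$ is entirely contained in $[[\xi_1,\xi_2]]$; fix a base point $\xi_{*}\in\gamma$ and let $M(t):=D_{\xi}\varphi(t,\xi_{*})$, which satisfies the variational equation $M'(t)=DH(\gamma(t))M(t)$, $M(0)=I$.

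First I would extract the consequence of the divergence sign via Liouville's formula,
\[
\det M(T) \;=\; \exp\!\left(\int_{0}^{T} {\rm div}\, H(\gamma(t))\,dt\right).
\]
Because $\gamma$ is compact and lies in the open order interval where ${\rm div}\,H<0$, this determinant is strictly less than $1$. Differentiating the semigroup identity $\varphi(t+s,\xi_{*})=\varphi(s,\varphi(t,\xi_{*}))$ at $s=0$ shows that $H(\xi_{*})$ is an eigenvector of $M(T)$ with eigenvalue $1$, so the remaining two Floquet multipliers $\lambda_{2},\lambda_{3}$ of $\gamma$ satisfy $\lambda_{2}\lambda_{3}=\det M(T)<1$.

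Next I would invoke cooperativity: since $DH(\gamma(t))$ is a Metzler matrix for every $t$, a Kamke-type comparison forces $M(t)$ to be entrywise nonnegative for every $t\geq 0$. Hence $M(T)$ is a nonnegative $3\times 3$ matrix, and by Perron--Frobenius it has a principal eigenvalue $\rho\geq 0$ that dominates $|\lambda_{2}|$ and $|\lambda_{3}|$ in modulus and possesses a nonnegative eigenvector.

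The main obstacle, and what I expect to be the hard part, is to turn these spectral facts into an outright contradiction. Since $\gamma$ is a nontrivial closed orbit the tangent field $H$ must change sign along $\gamma$, so $H(\xi_{*})$ cannot itself be the Perron eigenvector; analysing the action of $M(T)$ on the positive cone then forces $\rho\geq 1$ and hence $|\lambda_{2}|,|\lambda_{3}|\leq 1$, which combined with $\lambda_{2}\lambda_{3}<1$ is incompatible with the cone-preserving, monotone structure of the Poincar\'e return map at an isolated fixed point, giving the desired contradiction. An arguably cleaner route, also due to Hirsch, invokes the fact that a periodic orbit of a $C^{1}$ cooperative flow in $\bR^{3}$ must be contained in an invariant Lipschitz two-dimensional manifold transverse to $(1,1,1)$; the induced planar flow on this surface inherits the strictly negative divergence, and a Bendixson--Dulac argument on the surface rules out closed orbits outright. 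Either way, the delicate step is tying the sign of ${\rm div}\,H$ on an $n=3$ ambient domain to the non-existence of cycles of a cone-monotone planar (or quasi-planar) flow.
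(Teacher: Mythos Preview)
The paper does not prove this statement at all: Theorem~\ref{thmdet3} is quoted verbatim from Hirsch \cite[Theorem~7]{Hirsch1989} and is used as a black box in the proof of Theorem~\ref{thmrn}. There is therefore no ``paper's own proof'' to compare your proposal against.

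As for the content of your sketch, the two routes you outline are of very different quality. The second route---reducing to an invariant Lipschitz $2$-surface transverse to $(1,1,1)$ and running a Bendixson--Dulac argument there---is essentially Hirsch's own proof in \cite{Hirsch1989}; if you want to flesh this out you will need the (nontrivial) fact that in a three-dimensional cooperative system every periodic orbit bounds an invariant disk, and you must check that the ambient divergence restricts to a strictly negative surface divergence on that disk.

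Your first route, by contrast, is incomplete. Everything up through $\lambda_2\lambda_3=\det M(T)<1$ and the nonnegativity of $M(T)$ is fine, but the paragraph starting ``The main obstacle'' does not actually close the argument: having a dominant Perron eigenvalue $\rho\geq 1$ and $\lambda_2\lambda_3<1$ is perfectly compatible with a cone-preserving Poincar\'e map at an isolated periodic orbit (think of a hyperbolic periodic orbit that is attracting in one transverse direction and repelling in the other), so no contradiction has been reached. The sentence ``incompatible with the cone-preserving, monotone structure of the Poincar\'e return map'' hides the entire difficulty rather than resolving it. If you want a genuinely self-contained Floquet argument you would still have to invoke the planar reduction, at which point you are back to the second route.
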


\smallskip

\section{Proof of the Theorem \ref{newthm}}

\begin{itemize}
\item [(i) ]System \eqref{system} can be rewritten as
\begin{equation*}
\left\{
\begin{aligned}
(u'(r)r^{N-1})'&= r^{N-1}g(r,v),\\
(v'(r)r^{N-1})'&= r^{N-1}f(r,|\nabla u|),\\
v'(0)&=u'(0)=0.
\end{aligned}
\right.
\end{equation*}
An integration over $[0,r]$ then gives us
\begin{equation*}
\left\{
\begin{aligned}
u'(r)= r^{1-N}\int_{0}^{r}t^{N-1}g(t,v(t))dt,\\
v'(r)=r^{1-N}\int_{0}^{r}t^{N-1}f(t,|\nabla u(t)|)dt.
\end{aligned}
\right.
\end{equation*}
From which it follows that $u$ and $v$ are increasing. We can thus take the first integral equation above and estimate $u'$ as 
\begin{equation*}
u'(r)\leq r^{1-N}g(r,v(r))\int_{0}^{r}t^{N-1}dt=\frac{rg(r,v(r))}{N}
\end{equation*}
so from \eqref{system} we have that
\begin{equation*}
\begin{aligned}
g(r,v(r))&\leq u''(r)+\frac{N-1}{r}\cdot \frac{rg(r,v(r))}{N}\\
\imp \frac{g(r,v(r))}{N} &\leq u''(r).
\end{aligned}
\end{equation*}
Similarly we have
\begin{equation*}
v'(r)\leq \frac{rf(r,u'(r))}{N},
\end{equation*}
from which it follows
\begin{equation*}
\frac{f(r,u'(r))}{N} \leq v''(r).
\end{equation*}
We thus have the following two estimates, which will be important as we proceed:
\begin{equation}\label{u'' bound}
\frac{g(r,v(r))}{N} \leq u'' \leq g(r,v(r)),
\end{equation}
and
\begin{equation}\label{v'' bound}
\frac{f(r,u'(r))}{N} \leq v'' \leq f(r,u'(r)). 
\end{equation}
Now, where $w=u'$, we multiply the right inequality in \eqref{v'' bound} by $v'$ and integrate over $(0,r)$ to find
\begin{equation}\label{v' bound}
\begin{aligned}
\frac{v'(r)^{2}}{2} &\leq \int_{0}^{r} f(t,w(t))v'(t) dt \nonumber\\
&\leq f(r,w(r))\int_{0}^{r}v'(t)dt\nonumber\\
&\leq f(R,w(r))v(r),
\end{aligned}
\end{equation}
from which it follows
\begin{equation}\label{root f bound}
\frac{v'(r)}{\sqrt{v(r)}}\leq \sqrt{2}\sqrt{f(R,w(r))}.
\end{equation}
Now, fix $0<r_0<R$. From \eqref{u'' bound}, we see that for all $r>r_0$ we have
\begin{equation*}
g(r_{0},v(r))\leq Nw'(r),
\end{equation*}
and multiplying this inequality by \eqref{root f bound} we obtain
\begin{equation*}
\frac{v'(r)g(r_{0},v(r))}{\sqrt{v(r)}}\leq N\sqrt{2}\sqrt{f(R,w(r))}w'(r).
\end{equation*}
Integrating both sides of this equation over $(r_0,r)$, and changing variables gives
\begin{equation*}
\begin{aligned}
\int_{r_{0}}^{r}\frac{v'(t)g(r_{0},v(t))}{\sqrt{v(t)}}dt=\int_{v(r_{0})}^{v(r)}\frac{g(r_{0},t)}{\sqrt{t}}dt&\leq N\sqrt{2}\int_{r_0}^{r}\sqrt{f(R,w(t))}w'(t)dt\\
&\leq N\sqrt{2}\int_{w(r_0)>0}^{w(r)}\sqrt{f(R,t)}dt\\
&\leq N\sqrt{2}\int_{w(0)=0}^{w(r)}\sqrt{f(R,t)}dt\quad\mbox{for all}\ r>r_0.
\end{aligned}
\end{equation*}
From \eqref{u'' bound} we have that $w'(r)\leq g(R,v(r))$ for all $r>r_{0}$.
We now recall that
\begin{equation*}
	A_{r_{0}}(s)=\int_{0}^{s}\frac{g(r_{0},t)}{\sqrt{t}}dt,
	\end{equation*}
	is increasing, which gives us
	\begin{equation}\label{w' int bound}
		\begin{aligned}
			w'(r)\leq g(R,v(r))&=g\left(R, A_{r_{0}}^{-1}\left(\int_{0}^{v(r)}\frac{g(r_{0},t)}{\sqrt{t}}dt\right)\right)\nonumber\\
			&\leq g\left(R, A_{r_{0}}^{-1}\left(C\int_{0}^{w(r)}\sqrt{f(R,t)}dt\right)\right),
		\end{aligned}
	\end{equation}
	which then implies
	\begin{equation*}
		\frac{w'(r)}{g\left(R, A_{r_{0}}^{-1}\left(C\int_{0}^{w(r)}\sqrt{f(R,t)}dt\right)\right)}\leq C.
	\end{equation*}
	An integration over $(r_{0},r)$ then gives
	\begin{equation*}
		\int_{r_{0}}^{r}\frac{w'(t)}{g\left(R, A_{r_{0}}^{-1}\left(C\int_{0}^{w(t)}\sqrt{f(R,s)}ds\right)\right)}dt\leq C(r-r_{0})\leq Cr,
	\end{equation*}
	or, after a change of variables,
	\begin{equation*}
		\int_{w(r_{0})}^{w(r)}\frac{ds}{g\left(R, A_{r_{0}}^{-1}\left(C\int_{0}^{s}\sqrt{f(R,t)}dt\right)\right)}\leq Cr.
	\end{equation*}
	Letting $r\to R$, we see
	\begin{equation*}
		\int_{w(r_{0})}^{\infty}\frac{ds}{g\left(R, A_{r_{0}}^{-1}\left(C\int_{0}^{s}\sqrt{f(R,t)}dt\right)\right)}\leq C R<\infty,
	\end{equation*}
	and the result follows.

\smallskip

\item[(ii)]
	We see that \eqref{system} can be rewritten as
	\begin{equation}\label{new system}
		\left\{
		\begin{aligned}
			u(r)&=u(0)+\int_{0}^{r}t^{1-N}\left(\int_{0}^{t}g(s,v(s))ds\right)dt &&\quad\mbox r>0,\\
			v(r)&=v(0)+\int_{0}^{r}t^{1-N}\left(\int_{0}^{t}f(s,|u'(s)|)ds\right)dt &&\quad\mbox r>0,\\
			u(0)&>0 &&\quad\mbox v(0)>0.
		\end{aligned}
		\right.
	\end{equation}
	Using a contraction mapping argument, we can show that system \eqref{new system} has a solution $(u,v)$ defined on some maximum interval $[0,R_{0})$. Using Lemma 4.1 in~\cite{singh2015}, we see that for each $\r$
	\begin{equation}\label{F integral}
		g\left(R_{0}, C\left(\int_{0}^{s}\sqrt{f(\r,t)}dt\right)^{2}\right)\leq g\left(R_{0},2C\int_{0}^{s}F(\r,t)dt\right),
	\end{equation}
	where 
	\begin{equation*}
		F(r,t)\coloneqq \int_{0}^{t}f(r,\s)d\s.
	\end{equation*}
	Now, fix $\r\in(0,R_{0})$, and, recalling \eqref{u'' bound} and \eqref{v'' bound}, we have for all $\r\leq r< R_{0}$
	\begin{equation}\label{f,w' bounds}
		\begin{aligned}
			f(\r,w(r))&\leq Nv''(r),\\
			w'(r)&\leq g(R_{0},v(r)).
		\end{aligned}
	\end{equation}
	Multiplying the two inequalities in \eqref{f,w' bounds} and the integrating over $[\r,r]$ we find
	\begin{equation*}
		F(\r,w(r))-F(\r,w(\r))\leq Ng(R_{0},v(r))v'(r)
	\end{equation*}
	or, in other words,
	\begin{equation*}
		F(\r,w(r))\leq Cg(R_{0},v(r))v'(r)\quad\mbox{for all}\ \r\leq r<R_{0}.
	\end{equation*}
	Using \eqref{f,w' bounds} again, we see that the above becomes
	\begin{equation*}
		w'(r)F(\r,w(r))\leq Cg^{2}(R_{0},v(r))v'(r)\quad\mbox{for all}\ \r\leq r<R_{0}.
	\end{equation*}
	Define
	\begin{equation*}
		G(r)\coloneqq\int_{w(\r)}^{r}F(\r,t)dt\quad\mbox{for all}\ \r\leq r<R_{0}.
	\end{equation*}
	We thus have
	\begin{equation*}
		\begin{aligned}
			G(w(r))=\int_{w(\r)}^{w(r)}F(\r,t)dt&\leq C\int_{\r}^{r}g^{2}(R_{0}, v(t))v'(t)dt\\
			&=C\int_{v(\r)}^{v(r)}g^{2}(R_{0},t)dt\\
			&\leq CD(v(r)),
		\end{aligned}
	\end{equation*}
	which then yields
	\begin{equation*}
		\int_{0}^{w(r)}F(\r,t)dt=\int_{0}^{w(\r)}F(\r,t)dt+\int_{w(\r)}^{w(r)}F(\r,t)dt=C+\int_{w(\r)}^{w(r)}F(\r,t)dt\leq CD(v(r)),
	\end{equation*}
	and since $D^{-1}$ is an increasing function, we thus have (recalling $w'(r)\leq g(R_{0}, v(r)) $)
	\begin{equation*}
		C\leq \frac{w'(r)}{g(R_{0},D^{-1}(C\int_{0}^{w(r)}F(\r,t)dt))}.
	\end{equation*}
	Integrating the above over $[\r,r]$ gives
	\begin{equation*}
		C(r-\r)\leq \int_{\r}^{r}\frac{w'(t)\ dt}{g(R_{0},D^{-1}(C\int_{0}^{w(t)}F(\r,s)ds))}=\int_{w(\r)}^{w(r)}\frac{ds}{g(R_{0},D^{-1}(C\int_{0}^{s}F(\r,t)dt))}
	\end{equation*}
	Now, letting $r\to R_{0}$ we see that
	\begin{equation*}
		R_{0}\leq C\int_{w(\r)}^{\infty}\frac{ds}{g(R_{0},D^{-1}(C\int_{0}^{s}F(\r,t)dt))}\leq C\int_{1}^{\infty}\frac{ds}{g(R_{0},D^{-1}(C\int_{0}^{s}F(\r,t)dt))}  <\infty.
	\end{equation*}
	We have obtained a positive radial solution $(u,v)$ of \eqref{system} in $B_{R_{0}}$ satisfies $\lim_{r\to R_{0}}v(r)=\infty$. Now, if $R>0$ is any arbitrary radius, we set
	\begin{equation*}
		\begin{aligned}
			\tilde{f}\left(r,t\right)&=\l^{2}f\left(\l r,\frac{t}{\l}\right),\\
			\tilde{g}(r,t)&=\l^2g(\l r,t).
		\end{aligned}
	\end{equation*}
	By the above, we know there exists $(\tilde{u},\tilde{v})$ satisfying
	\begin{equation*}
		\left\{
		\begin{aligned}
			\Delta \tilde{u}&= \tilde{g}(r,\tilde{v(r)}) &&\quad\mbox{in}\ B_{R_{0}},\\
			\Delta \tilde{v}&= \tilde{f}(r,|\nabla \tilde{u(r)}|) &&\quad\mbox{in}\ B_{R_{0}},
		\end{aligned}
		\right.
	\end{equation*}
	where $B_{R_{0}}$ is a maximum ball of existence. Let
	\begin{equation*}
		\left\{
		\begin{aligned}
			u(r)&= \tilde{u}\left(\frac{r}{\l}\right) &&\quad\mbox{in}\ B_{R},\\
			v(r)&= \tilde{v}\left(\frac{r}{\l}\right) &&\quad\mbox{in}\ B_{R}.
		\end{aligned}
		\right.
	\end{equation*}
	Taking $\l=\frac{R}{R_{0}}$ we see that $(u,v)$ is a solution to \eqref{system} in $B_{R}$.

\smallskip

\item[(iii)]
	First, we note that one can proceed as in the proof of Lemma 2 to obtain the local existence of a solution, and then use the scaling argument from Lemma 2 to obtain a solution in a ball of radius $R$. Arguing in a similar way to Lemmas 1 and 2, we find that there exists $r_{0},\r\in(0,R)$ such that
	\begin{equation}\label{int comparison}
		\int_{w(r)}^{\infty}\frac{ds}{g\left(R, A_{r_{0}}^{-1}\left(C_{1}\int_{0}^{s}\sqrt{f(R,t)}dt\right)\right)}\leq C_{2}(R-r)
	\end{equation}
	and
	\begin{equation}\label{int lower bound}
		\int_{w(r)}^{\infty}\frac{ds}{g(R,D^{-1}(C_{3}\int_{0}^{s}F(\r,t)dt))}\geq C_{4}(R-r)
	\end{equation}
	for all $\r<r<R$, where $C_{i}>0$ for $i=1,...,4$.  Now, let $\Psi,\Theta:(0,\infty)\rightarrow (0,\infty)$ be defined as
	\begin{equation*}
		\begin{aligned}
			\Psi(t)&=\int_{t}^{\infty}\frac{ds}{g(R,D^{-1}(C_{3}\int_{0}^{s}F(\r,t)dt))},\\ \Theta(t)&= \int_{t}^{\infty}\frac{ds}{g\left(R, A_{r_{0}}^{-1}\left(C_{1}\int_{0}^{s}\sqrt{f(R,t)}dt\right)\right)}.
		\end{aligned}
	\end{equation*}
	We note that $\Phi,\Theta$ are decreasing and we see $\lim_{t\to \infty}\Theta(t)=\lim_{t\to \infty}\Phi(t)=0$. From \eqref{int comparison}and \eqref{int lower bound} we find
	\begin{equation*}
		\Theta(w(r))\leq C_{2}(R-r)\quad\mbox{and}\quad\ \Psi(w(r))\geq C_{4}(R-r)\quad\mbox{for all}\ \r\leq r<R. 
	\end{equation*}
	Since $\Phi,\Theta$ are decreasing, this then implies
	\begin{equation*}
		\begin{aligned}
			w(r)&\geq \Theta^{-1}(C_{1}(R-r)) &&\quad\mbox{for all}\ \r\leq r<R,\\
			w(r)&\leq \Psi^{-1}(C_{2}(R-r)) &&\quad\mbox{for all}\ \r\leq r<R.
		\end{aligned}
	\end{equation*}
	Now, recalling that
	\begin{equation*}
		u(r)=u(\r)+\int_{\r}^{r}w(t)dt\quad\mbox{for all}\ \r\leq r<R,
	\end{equation*}
	we see that $\lim_{r\to R}u(r)=\infty$ if and only if
	\begin{equation*}
		\int_{\r}^{R}w(t)dt=\infty,
	\end{equation*}
	which implies
	\begin{equation*}
		\int_{\r}^{R}\Psi^{-1}(C(R-r))dt=\infty,
	\end{equation*}
	for some $C>0$. Hence $\lim_{r\to R}u(r)=\infty$ only if
	\begin{equation*}
		\int_{0}^{C(R-\r)}\Psi^{-1}(u)du=\infty,
	\end{equation*}
	which is true if and only if
	\begin{equation*}
		\int_{0}^{1}\Psi^{-1}(u)du=\infty.
	\end{equation*}
	The change of variables $s=\Psi^{-1}(u)$ then gives us that $\lim_{r\to R}u(r)=\infty$ only if
	\begin{equation*}
		\int_{1}^{\infty}\frac{s\ ds}{g(R,D^{-1}(C_{3}\int_{0}^{s}F(\r,t)dt))}=\infty.
	\end{equation*}
	The proof of the second statement follows a similar argument to above.
\end{itemize}

\smallskip

\section{Proof of the Theorem \ref{thm1}}
System \eqref{osystem} can be rewritten as
\begin{equation*}
\left\{
\begin{aligned}
&(u'(r)r^{N-1})'= r^{N+a-1}v^{p}(r),\\
&(v'(r)r^{N-1})'= r^{N+b-1}h(|\nabla u|),\\
&u'(0)=v'(0)=0.
\end{aligned}
\right.
\end{equation*}
An integration over $(0,r)$, $0<r<R$, then gives us
\begin{equation*}
\left\{
\begin{aligned}
u'(r)&= r^{1-N}\int_{0}^{t}t^{N+a-1}v^{p}(t)dt,\\
v'(r)&=r^{1-N}\int_{0}^{t}t^{N+b-1}h(|\nabla u|)dt,
\end{aligned}
\right.
\end{equation*}
from which it follows that $u$ and $v$ are increasing in $(0,R)$. We can thus take the first integral equation above and estimate $u'$ as 
\begin{equation*}
u'(r)\leq r^{1-N}v^{p}(r)\int_{0}^{r}t^{N+a-1}dt=\frac{r^{a+1}v^{p}(r)}{N+a}
\end{equation*}
so from \eqref{osystem} we have that
\begin{equation*}
r^{a}v^{p}(r)\leq u''(r)+\frac{N-1}{r}\cdot \frac{r^{a+1}v^{p}(r)}{N+a},
\end{equation*}
which further implies,
\begin{equation*}
	\frac{1+a}{N+a}r^{a}v^{p}(r) \leq u''(r).
\end{equation*}
Similarly we have
\begin{equation*}
v'(r)\leq \frac{r^{b+1}h(u'(r))}{N+b},
\end{equation*}
from which it follows
\begin{equation*}
	\frac{1+b}{N+b}r^{b}h(u'(r)) \leq v''(r).
\end{equation*}
We thus have the following two estimates for all $0<r<R$, 
\begin{equation}\label{u'' bound}
\frac{1+a}{N+a}r^{a}v^{p}(r)\leq u'' \leq r^{a}v^{p}(r), 
\end{equation}

\begin{equation}\label{v'' bound}
\frac{1+b}{N+b}r^{b}h(u'(r))\leq v'' \leq r^{b}h(u'(r)). 
\end{equation}

The remaining of the proof will follow on the similar lines as the proof of Theorem 1 in \cite{S2015}

\section{Proof of the Theorem \ref{thmrn}}
We obtained that $u'$, $v'$, $u$, $v$ are increasing in the proof of the Theorem \ref{thm1} and 
\begin{equation*}
\left\{
\begin{aligned}
&u'(r)=r^{1-N}\int_{0}^{r}{t^{N-1+a}v^{p}(t)}dt\quad\mbox{ for all } r>0,\\
&v'(r)=r^{1-N}\int_{0}^{r}{t^{N-1+b}(u')^{s}(t)}dt \quad\mbox{ for all } r>0,
\end{aligned}
\right.
\end{equation*}
which gives us
\begin{equation}\label{rn1}
	\frac{r^{a+1}v^{p}(0)}{N+a}\leq u'(r)\leq \frac{r^{a+1}v^{p}(r)}{N+a} \quad\mbox{ for all } r>0
\end{equation}
and
\begin{equation}\label{rn2}
	\frac{v^{ps}(0)r^{(a+1)s+b+1}}{(N+b)(N+a)^s}\leq v'(r)\leq \frac{r^{b+1}u'^{s}(r)}{N+b} \quad\mbox{ for all } r>0.
\end{equation}
Using \eqref{rn1} and \eqref{rn2} we obtain that $u'(r)$, $v'(r)$, $u(r)$, $v(r)$ tend to infinty as $r\rightarrow \infty$. Next, we do the following change of variables ( see \cite{HV1996}, \cite{BVH2010,G2012} )
$$
X(t)= \frac{ru'(r)}{u(r)},\;\;Y(t)= \frac{rv'(r)}{v(r)},\;\;Z(t)=\frac{r^{a+1}v^{p}(r)}{u'(r)} \mbox{ and }W(t)=\frac{r^{b+1}u'^{s}(r)}{v'(r)}, 
$$
where $t= \ln(r)$ for  $r\in (0,\infty)$. Direct computation shows that $(X(t),Y(t),Z(t),W(t))$ satisfies
\begin{equation}\label{rn3}
\left\{
\begin{aligned}
		&X_{t}= X(Z-(N-2)-X) \quad\mbox{ for all } t\in \bR, \\
		&Y_{t}= Y(W-(N-2)-Y) \quad\mbox{ for all } t\in \bR, \\
		&Z_{t}= Z(N+a+pY-Z) \quad\mbox{ for all } t\in \bR, \\
		&W_{t}= W(sZ+N-sN+s+b-W) \quad\mbox{ for all } t\in \bR.
\end{aligned}
\right.
\end{equation}
Also, by L'Hopital's rule we deduce that $\lim_{t\rightarrow \infty}X(t)=2-N+\lim_{t\rightarrow \infty}Z(t)$. Hence, it is enough to study the last three equations of \eqref{rn3}, that is, 
\begin{equation}\label{rn4}
\left\{
\begin{aligned}
&Y_{t}= Y(W-(N-2)-Y) \quad\mbox{ for all } t\in \bR, \\
&Z_{t}= Z(N+a+pY-Z) \quad\mbox{ for all } t\in \bR, \\
&W_{t}= W(sZ+N-sN+s+b-W) \quad\mbox{ for all } t\in \bR.
\end{aligned}
\right.
\end{equation}
Our system can be rewritten as
\begin{equation}\label{rn5}
\xi_{t}= H(\xi)
\end{equation}
where
$$
\xi=\left(\begin{array}{c}Y(t)\\Z(t)\\W(t)\end{array}\right) \quad\mbox{ and } \quad H(\xi)= \left(\begin{array}{c} Y(W-(N-2)-Y)\\Z(N+a+pY-Z)\\ W(sZ+N-sN+s+b-W) \end{array}\right).
$$

One notices that the system \eqref{rn5} is cooperative. Therefore, the following comparison principle holds:
\begin{lemma}\label{lrn1}
	Let us assume that $\xi(t)= \left(\begin{array}{c}Y(t)\\Z(t)\\W(t)\end{array}\right)$ and $\tilde{\xi}(t)= \left(\begin{array}{c}\tilde{Y}(t)\\ \tilde{Z}(t)\\ \tilde{W}(t)\end{array}\right)$ are the two nonnegative solutions of \eqref{rn5} such that
	$$
	Y(t_0)\geq \tilde{Y}(t_0), \;\;\;Z(t_0)\geq \tilde{Z}(t_0),\;\;\; W(t_0)\geq \tilde{W}(t_0)
	$$
	for some $t_0\in \bR$. Then, we have
	$$
	Y(t)\geq \tilde{Y}(t), \;\;\;Z(t)\geq \tilde{Z}(t),\;\;\; W(t)\geq \tilde{W}(t) \quad\mbox{ for all }t\geq t_{0}.
	$$
\end{lemma}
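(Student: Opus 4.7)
The plan is to exploit the cooperative structure of \eqref{rn5} through a standard Kamke-type comparison argument combined with a drift perturbation. First, on the nonnegative orthant $\bR_+^3$ where both solutions live, the off-diagonal partial derivatives of $H$ are nonnegative: explicitly $\partial H_1/\partial W = Y$, $\partial H_2/\partial Y = pZ$, $\partial H_3/\partial Z = sW$, while the remaining three off-diagonal entries are identically zero. Moreover, each $H_i$ carries the $i$-th coordinate as a factor, so the nonnegative orthant is positively invariant; in particular, any component of $\tilde{\xi}$ which vanishes at $t_0$ stays at zero for all later times, making the asserted inequality trivial in that slot. It is therefore enough to carry out the comparison in the interior of $\bR_+^3$.

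Next, for every $\varepsilon>0$ I would introduce the drift-perturbed solution $\xi^\varepsilon$ defined by
$$
\xi^\varepsilon_t = H(\xi^\varepsilon)+\varepsilon \mathbf{1}, \qquad \xi^\varepsilon(t_0)=\xi(t_0)+\varepsilon \mathbf{1},
$$
where $\mathbf{1}=(1,1,1)$, and claim that $\xi^\varepsilon(t)>\tilde{\xi}(t)$ componentwise for every $t\ge t_0$ in the common existence interval. Arguing by contradiction, pick the first time $T>t_0$ at which this strict inequality fails; by continuity there is an index $i$ with $\xi^\varepsilon_i(T)=\tilde{\xi}_i(T)$, while $\xi^\varepsilon_j(T)\ge\tilde{\xi}_j(T)$ for $j\ne i$. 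Cooperativity applied to $H_i$ (whose dependence on the $i$-th variable is irrelevant because those values agree at $T$) yields $H_i(\xi^\varepsilon(T))\ge H_i(\tilde{\xi}(T))$, hence
$$
(\xi^\varepsilon_i-\tilde{\xi}_i)'(T) = H_i(\xi^\varepsilon(T))-H_i(\tilde{\xi}(T))+\varepsilon \ge \varepsilon > 0.
$$
However, $\xi^\varepsilon_i-\tilde{\xi}_i$ is strictly positive on $[t_0,T)$ and vanishes at $T$, so its left derivative at $T$ is nonpositive, a contradiction that proves the claim.

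Finally, I would let $\varepsilon\to 0^+$. Since $H\in C^1$, continuous dependence of ODE solutions on initial data and on the parameter gives $\xi^\varepsilon\to\xi$ locally uniformly in $t$, so passing to the limit in $\xi^\varepsilon(t)>\tilde{\xi}(t)$ yields $\xi(t)\ge\tilde{\xi}(t)$ for every $t\ge t_0$. The main obstacle anticipated, and the reason the drift perturbation $\varepsilon\mathbf{1}$ is essential rather than a perturbation only of the initial datum, is the derivative comparison at $T$: without the $+\varepsilon$ term in the ODE the argument gives a derivative of the difference which is simultaneously $\ge 0$ (from cooperativity) and $\le 0$ (from the geometry at $T$), hence zero, producing no contradiction. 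The strict $+\varepsilon$ drift is precisely what separates the two derivatives and closes the argument; everything else is routine continuity and continuous dependence.
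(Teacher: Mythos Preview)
Your argument is correct; it is the classical Kamke--M\"uller comparison principle for cooperative (quasimonotone) systems, carried out via the standard $\varepsilon$-drift perturbation that forces a genuine contradiction at the first touching time and then recovered by continuous dependence. The computation of the off-diagonal partials is accurate on $\bR_+^3$, the invariance of the coordinate hyperplanes disposes of the degenerate boundary cases, and the contradiction at $T$ is clean because the difference is $C^1$ there.

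The paper, by contrast, does not give a proof at all: it simply observes that the vector field $H$ is cooperative and then states Lemma~\ref{lrn1} as the resulting comparison principle, relying implicitly on the general monotone-flow theory cited just before (Hirsch~\cite{Hirsch1989,Hirsch1990}). So your contribution is a self-contained argument where the paper appeals to a standard fact. What your approach buys is that a reader unfamiliar with Kamke--M\"uller theory can follow the lemma without chasing references; what the paper's approach buys is brevity, since this comparison principle is indeed textbook for $C^1$ cooperative systems and does not require the specific form of $H$ beyond the sign of the off-diagonal partials.
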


\bigskip

\bigskip

Using \eqref{rn1} and \eqref{rn2} we deduce that $Z\geq N+a$ and $W\geq N+b$. Hence, we only have two equilibria of \eqref{rn4} which satisfy $Z\geq N+a$ and $W\geq N+b$, that is,
$$
\xi_{1}= \left(\begin{array}{c}0\\N+a\\N+s(a+1)+b\end{array}\right) \quad\mbox{  and }
\xi_{2}=\left(\begin{array}{c}2 + \frac{b+s(1+a+2p)}{1-ps},\\N+a + p\Big(2+\frac{b+s(1+a+2p)}{1-ps}\Big)\\N + \frac{b+s(1+a+2p)}{1-ps}\end{array}\right).
$$
\begin{lemma}\label{lrn2}
$\xi_{2}$ is asymptotically stable.
\end{lemma}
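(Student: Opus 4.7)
The plan is to prove local asymptotic stability of $\xi_2$ by linearising the vector field $H$ at $\xi_2$ and verifying that every eigenvalue of the resulting Jacobian has strictly negative real part. This reduces to the Routh--Hurwitz criterion applied to an explicit cubic characteristic polynomial.

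First I would write the Jacobian of $H$ at a generic interior point and then specialise to $\xi_2 = (Y^*, Z^*, W^*)$. Each component of $H$ has the form (coordinate)$\times$(bracket), and at an interior equilibrium every bracket vanishes. Consequently, the diagonal entries of $J(\xi_2)$ collapse to $-Y^*, -Z^*, -W^*$, while the off-diagonal entries come directly from differentiating each bracket with respect to the other variables:
\begin{equation*}
J(\xi_2) = \begin{pmatrix} -Y^* & 0 & Y^* \\ pZ^* & -Z^* & 0 \\ 0 & sW^* & -W^* \end{pmatrix}.
\end{equation*}
Under the hypotheses $p < 1$ and $ps < 1$, the three entries $Y^*, Z^*, W^*$ are strictly positive.

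Next, expanding $\det(J(\xi_2) - \lambda I)$ along the first row gives the relation $(Y^*+\lambda)(Z^*+\lambda)(W^*+\lambda) = psY^*Z^*W^*$, which rearranges to the characteristic polynomial
\begin{equation*}
\lambda^3 + (Y^* + Z^* + W^*)\lambda^2 + (Y^*Z^* + Y^*W^* + Z^*W^*)\lambda + (1 - ps)Y^*Z^*W^* = 0.
\end{equation*}
All four coefficients are strictly positive precisely because $ps<1$. To verify the remaining Routh--Hurwitz inequality $a_1 a_2 > a_3$, I would expand and simplify to obtain
\begin{equation*}
a_1 a_2 - a_3 = (Y^*)^2 Z^* + (Y^*)^2 W^* + Y^*(Z^*)^2 + (Z^*)^2 W^* + Y^*(W^*)^2 + Z^*(W^*)^2 + (2+ps)Y^*Z^*W^*,
\end{equation*}
which is a sum of manifestly positive terms.

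The only step needing some care is the algebraic verification that $\xi_2$ truly satisfies each of the three equilibrium equations of \eqref{rn4} (so that the diagonal of the Jacobian collapses exactly to $-Y^*,-Z^*,-W^*$), together with the correct bookkeeping that makes the constant term of the characteristic polynomial acquire the factor $1 - ps$ rather than, say, $1 + ps$. Once this is in place, Routh--Hurwitz delivers three eigenvalues with negative real parts and hence local asymptotic stability of $\xi_2$; no Lyapunov function or appeal to the cooperative-system machinery (Theorems \ref{thmdet1}--\ref{thmdet3}) is needed for this lemma.
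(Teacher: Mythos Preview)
Your proposal is correct and follows essentially the same approach as the paper: linearise at $\xi_2$, obtain the identical Jacobian and cubic characteristic polynomial $\lambda^3+\alpha\lambda^2+\beta\lambda+(1-ps)\gamma$, and then check that all roots have negative real part. The only cosmetic difference is that the paper splits into the cases of three real roots versus one real and two complex roots and verifies the key inequality $\alpha\beta>(1-ps)\gamma$ via AM--GM, whereas you invoke Routh--Hurwitz directly and verify the same inequality by explicit expansion; both arguments are equivalent.
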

\begin{proof}
At $\xi_2$, we have the following linearized matrix: 
$$
M=\left[\begin{array}{ccc}-Y_2&0&Y_2\\pZ_2&-Z_2&0\\0&sW_2&-W_2\end{array}\right],
$$
and the associated characteristic polynomial of $M$ is
$$
P(\lambda)=\det(\lambda I-M)=\lambda^{3}+\alpha\lambda^{2}+\beta\lambda+(1-ps)\gamma,
$$
where
\begin{equation*}
\begin{aligned}
\alpha&=Y_2+Z_2+W_2\\
\beta&=Y_2Z_2+Z_2W_2+Y_2W_2\\
\gamma&=Y_2Z_2W_2.
\end{aligned}
\end{equation*}
As $ps<1$ and  $\alpha$, $\beta$, $\gamma>0$, we have that $P(\lambda)>0$ for all $\lambda \geq 0$. In case $P$ has three real roots, then all of them are negative, which makes $\xi_2$ asymptotically stable. Next, we need to consider the case where $P$ has exactly one real root. So, let us assume that $\lambda_1 \in \bR$ and $\lambda_2,\lambda_3 \in \mathbb{C}\setminus \bR$ be the roots of characteristic polynomial  $P$. We need to show that ${\rm Re}(\lambda_2)={\rm Re}(\lambda_3)<0$, that is, $P(-\alpha)=-\beta\alpha+(1-ps)\gamma<0$, which is same as claiming, $\beta\alpha>(1-ps)\gamma$. By the use of AM-GM inequality we get that
$$
\alpha \geq 3\sqrt[3]{Y_2Z_2W_2}\quad  \mbox{ and } \quad \beta \geq 3\sqrt[3]{(Y_2Z_2W_2)^{2}}
$$
which further gives us the desired result, that is, $\alpha\beta> (1-ps)\gamma$. Hence, $\xi_2$ is asymptotically stable.

\end{proof}

\begin{lemma}\label{lrn3}
	For all $t\in \bR$, we have
	\begin{equation}\label{rn6}
		0\leq Y(t)\leq 2 + \frac{b+s(1+a+2p)}{1-ps},
	\end{equation}
	\begin{equation}\label{rn7}
		N+a\leq Z(t)\leq N+a + p\Big(2+\frac{b+s(1+a+2p)}{1-ps}\Big),
	\end{equation}
	\begin{equation}\label{rn8}
		N+s(a+1)+b\leq W(t)\leq N + \frac{b+s(1+a+2p)}{1-ps}.
	\end{equation}
\end{lemma}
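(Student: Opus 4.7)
The plan is to trap the trajectory $\xi(t)=(Y(t),Z(t),W(t))$ of \eqref{rn5} inside the order interval $[[\xi_1,\xi_2]]$ by combining the behavior of $\xi$ as $t\to-\infty$ with the cooperative comparison principle in Lemma \ref{lrn1}. The lower and upper vertices of this interval are precisely the two equilibria of \eqref{rn4}, so what must be shown is that $\xi(t)$ emanates from $\xi_1$ and evolves forward in time without leaving the box from above.

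First, I would establish the limit $\xi(t)\to\xi_1$ as $t\to-\infty$ (that is, as $r\to 0^+$) by Taylor expanding the integrals for $u'(r)$ and $v'(r)$ around $r=0$. Using $u(0),v(0)>0$ and $u'(0)=v'(0)=0$, one obtains $u'(r)\sim r^{a+1}v^p(0)/(N+a)$ and $v'(r)\sim r^{s(a+1)+b+1}v^{ps}(0)/[(N+a)^s(N+s(a+1)+b)]$, and substituting into the change of variables gives $Y(t)\to 0$, $Z(t)\to N+a$, $W(t)\to N+s(a+1)+b$.

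For the lower bounds, $Y(t)\geq 0$ is immediate from the definition. The bound $Z(t)\geq N+a$ is already recorded in the text just before the lemma, coming from the upper estimate in \eqref{rn1}. For the sharper bound $W(t)\geq N+s(a+1)+b$, I would invoke the $W$-equation in \eqref{rn4}: since $Z\geq N+a$, the $W$-nullcline $sZ+N-sN+s+b$ is bounded below by $N+s(a+1)+b$, so $W(t)<N+s(a+1)+b$ implies $W_t(t)>0$. Together with the asymptotic $W(-\infty)=N+s(a+1)+b$, a short contradiction argument rules out $W(t^*)<N+s(a+1)+b$ at any $t^*$, since walking backwards from $t^*$ the strict monotonicity of $W$ (while it remains below the threshold) is incompatible with the $-\infty$ limit.

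For the upper bounds, a direct computation using $ps<1$ shows $\xi_1<\xi_2$ componentwise, so the limit $\xi(t)\to\xi_1$ provides some $t_0$ very negative with $\xi(t_0)\leq\xi_2$. As $\xi_2$ is an equilibrium of \eqref{rn5}, the constant function $\tilde\xi(t)\equiv\xi_2$ is a solution, and Lemma \ref{lrn1} applied at time $t_0$ yields $\xi(t)\leq\xi_2$ for all $t\geq t_0$; sending $t_0\to-\infty$ extends the bound to all of $\bR$. The main obstacle is the careful $r\to 0^+$ expansion needed to pin down $\xi(-\infty)=\xi_1$; once this limit is in hand, both the refined lower bound on $W$ and the upper bounds on all three components follow cleanly from the cooperative structure.
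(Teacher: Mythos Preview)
Your proposal is correct and follows essentially the same strategy as the paper: determine the behaviour of $\xi(t)$ as $t\to-\infty$, then trap the trajectory between the equilibria $\xi_1$ and $\xi_2$ using the cooperative comparison principle (Lemma~\ref{lrn1}) applied to the constant solutions $\tilde\xi\equiv\xi_1$ and $\tilde\xi\equiv\xi_2$. The only technical difference is that you compute the full limit $\xi(-\infty)=\xi_1$ directly from the asymptotic expansions of $u'(r)$ and $v'(r)$ near $r=0$, whereas the paper obtains the limits for $Y$ and $Z$ this way but handles the $W$-component via a contradiction argument using L'H\^opital's rule; your route is marginally cleaner but not substantively different.
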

\begin{proof}
As $v'(0)=0$ and $v(0)=0$, we deduce that $\lim_{t\rightarrow -\infty}Y(t)= \lim_{r\rightarrow 0}\frac{rv'(r)}{v(r)}=0$. 
	
Next, we show that there exists $t_{j}\rightarrow -\infty $ such that
\begin{equation}\label{rn9}
\left\{
\begin{aligned}
&Y(t_{j})\leq Y_2,\\
&Z(t_j)\leq Z_2,\\
&W(t_j)\leq W_2.
\end{aligned}
\right.
\end{equation}
Since $\lim_{t\rightarrow -\infty}Y(t)=0$ and $\lim_{t\rightarrow -\infty}Z(t)= N+a$, we only need to prove the last part of \eqref{rn9}. So, let us assume by contradiction that this is not true. Thus $W> W_2$ in $(-\infty,t_0)$ for some $t_0\in \bR$. Then, we have
$$
W_t= W(sZ+N-sN+s+b-W)< 0 \quad\mbox{ in }(-\infty,t_0).
$$
for small enough $t_0$. Therefore, $W$ is decreasing in the neighbourhood of $-\infty$, that is, there exists $\ell= \lim_{t\rightarrow -\infty}W(t)$. Again, by the use L'Hopital's rule we obtain
\begin{equation*}
\begin{aligned}
\ell=\lim_{t\rightarrow -\infty}W(t)&=\lim_{r\rightarrow 0}\frac{r^{b+1}u'^{s}(r)}{v'(r)}\\
&=\frac{s(N+a)-s(N-1)+b+1}{1-\frac{N-1}{\ell}},
\end{aligned}
\end{equation*}
which yields $\ell=N+s(a+1)+b<W_2$ and this contradicts the assumption that $W>W_{2}$ in a neighbourhood of $-\infty$. Hence, with this we have proven the last part of \eqref{rn9}. Next, we apply the Comparison Lemma \ref{lrn1} on all the intervals $[t_j,\infty)$ for $j\geq 1$ in order to obtain the upper bound inequalities in Lemma \ref{lrn3}. Similarly, the lower bound inequalities can be obtained.
\end{proof}

\medskip

Now, assume $L=\overline{[[\xi_{1},\xi_{2}]]}\subset \bR^{3}$. We have that $\omega(\zeta)\subseteq L $ by Lemma \ref{lrn3}. As $\zeta_2$ is asymptotically stable, we get that $L$ has no circuits. Also, by \eqref{div}, one could obtain that
$$
{\rm div}\, H(\xi)=-W+(s-2)Z+(-2+p)Y+2+a+N(1-s)+s+b<0\quad\mbox{ in } L.
$$
By the use of Theorems \ref{thmdet2} and \ref{thmdet3} we get that $\omega(\xi)$ reduces to one of the equilibria $\xi_1$ or $\xi_2$. In case $\xi(t)\rightarrow \xi_1$ as $t\rightarrow \infty$, then we have that $Y(t)\rightarrow 0$ as $t\rightarrow \infty$. Also, we obtain that $Y_t> 0$ in a neighbourhood of infinity by using the second equation of \eqref{rn3} which is impossible given that $Y(t)> 0$ in $\bR$. Therefore, $\xi(t)\rightarrow \xi_2$ as $t\rightarrow \infty$, that is
\begin{equation*}
\begin{aligned}
\lim_{t\rightarrow \infty}X(t)&= 2+a + p\Big(2+\frac{b+s(1+a+2p)}{1-ps}\Big),\\
\lim_{t\rightarrow \infty}Y(t)&= 2 + \frac{b+s(1+a+2p)}{1-ps},\\
\lim_{t\rightarrow \infty}Z(t)&= N+a + p\Big(2+\frac{b+s(1+a+2p)}{1-ps}\Big),\\
\lim_{t\rightarrow \infty}W(t)&= N + \frac{b+s(1+a+2p)}{1-ps}.
\end{aligned}
\end{equation*}
Now , using $(X(t),Y(t),Z(t),W(t))$ , we get
\begin{equation*}
\lim_{|x|\rightarrow \infty}\frac{u(x)}{|x|^{\frac{(a+2)(1-ps)+ ps(a+1)+bp}{1-ps}}}= \frac{(AB^s K)^{\frac{p}{ps - 1}}}{DK}
\end{equation*}
and
\begin{equation*}
\lim_{|x|\rightarrow \infty}\frac{v(x)}{|x|^{\frac{(a+1)s + b + 2}{1-ps}}}= (AB^s K)^{\frac{1}{ps - 1}},
\end{equation*}
where, 
$$
A = \lim_{t\rightarrow \infty}Y(t),
$$

$$
B = \lim_{t\rightarrow \infty}Z(t),
$$

$$
K = \lim_{t\rightarrow \infty}W(t),
$$

$$
D = \lim_{t\rightarrow \infty}X(t).
$$

\end{document}